\def\circleds{{\bigcirc \!\!\!\!\!\!\!\!s}\;\;}
\newtheorem{theorem}{Theorem}[section]
\newtheorem{lemma}[theorem]{Lemma}
\newtheorem{corollary}[theorem]{Corollary}
\newtheorem{definition}[theorem]{Definition}
\numberwithin{equation}{section}
\setlist{label={$($\roman{enumi}\kern1pt$)$}}
\begin{document}
\title{The Defect sequence for contractive tuples}
\thanks{This research is supported by UGC SAP and UKIERI project Quantum Probability, Noncommutative
 Geometry and Quantum Information}%
\author[Bhattacharyya]{Tirthankar Bhattacharyya}

\address{
 (Bhattacharyya) Department of Mathematics\\
  Indian Institute of Science\\
  Bangalore\\
  560 012\\
  India}

\email{tirtha@math.iisc.ernet.in}

\author[Das] {Bata Krishna Das}

\address{
(Das) Department of Mathematics and Statistics\\
Lancaster University\\
Lancaster LA1 4YF\\
United Kingdom }

\email{b.das@lancaster.ac.uk}

\author[Sarkar]{Santanu Sarkar}

\address{
(Sarkar)  Department of Mathematics\\
  Indian Institute of Science\\
  Bangalore\\
  560 012\\
  India}

\email{santanu@math.iisc.ernet.in}

\begin{abstract}
We introduce the defect sequence for
a contractive tuple of Hilbert space operators and investigate its properties.
The defect sequence is a sequence of numbers, called defect dimensions associated with a contractive tuple.
We show that there are upper bounds for the defect dimensions. The tuples for which these upper bounds are obtained, are called maximal contractive tuples. The upper bounds are different in the non-commutative and in the commutative case.
We show that the creation operators on the full Fock space and the co ordinate multipliers on the Drury-Arveson space are maximal.
We also study pure tuples and see how the defect dimensions play a role in their irreducibility.
\end{abstract}

\subjclass[2000]{46L07 (Primary) 46L08, 46E22, 46B20 (Secondary)}
\keywords{Contractive tuples, Defect sequence, Defect dimensions,
Maximal contractive tuples.}

\maketitle

\section{Introduction}

Let us fix a positive integer $d$ for this paper.  Consider a tuple $(T_1,\cdots,T_d)$ of bounded operators on a complex,separable,infinite dimensional Hilbert space $H$ and  the completely positive map $P_T : \mathcal B(  H) \rightarrow \mathcal B(  H)$ given by $$P_T(X) = \sum_{i=1}^d T_iXT_i^* , X \in \mathcal B(H) $$ We assume that  $$T_1T_1^* + \cdots+T_dT_d^* \leq I.$$ Such a tuple
$T = (T_1, T_2, \ldots ,T_d)$ is called a \textit{ row contraction }\cite{Pop-Poisson}
or a \textit{contractive tuple} \cite{{BES},{Pop-Poisson}}. Arveson has called it \textit{$d$ contraction} \cite{sub3} in case $T_iT_j = T_jT_i$ for all $i,j = 1,\cdots,d.$ The completely positive map $P_T$ plays a crucial role in dilation theory of a row contraction, see \cite {Bhattacharyya-survey}. The contractivity assumption above means that $P_T$ is a completely positive and contractive map. Thus $$I \geq P_{T} (I) \geq P_{T} ^2(I) \geq \cdots.$$This sequence of positive operators is decreasing and hence has a strong limit. If the limit is $0,$ then the tuple $T$ is called \textbf{ pure}.
In this note, we associate a sequence of numbers which we shall call the defect  dimensions  with such an operator tuple.  Let $H_1 =  \overline{\mbox{ Range }} (I - P_T(I) )$ be the first defect space, and let $\Delta _T = \mbox {dim} H_1$ be the \textit{first defect dimension.}\\ Given an $m$-tuple $B = (B_1, B_2, \ldots ,B_m)$ and a $k$-tuple $C = (C_1, C_2, \ldots ,C_k)$ of bounded operators on $H$, define their product to be the $mk$-tuple
\begin{equation} \label{product} B C = (B_1 C_1, \ldots ,B_1 C_k , B_2 C_1, \ldots B_2 C_k, \ldots , B_i C_j , \ldots , B_m C_1, \ldots , B_m C_k ). \end{equation} This product has been very useful in proving the spectral radius formula, See \cite{BB}.
With this definition, $T^n$ is a $d^n$-tuple whose typical entry is $T_{i_1} T_{i_2} \ldots T_{i_n}$ with $1 \le i_1, i_2, \ldots ,i_n \le d$. Note that $P_T^n(I) = P_{T^n}(I)$. Let $H_n = \overline{\mbox{ Range }} (I - P_T^n(I) )$ be the $n-$th defect space, and let $\Delta_{T^n} = \mbox{dim} H_n$ be the \textit{$n-$th defect dimension.} Thus if $H_1 = 0$ then $H_n =0$ for all $n\geq 1$. So we assume that $0< \Delta _{T} < \infty.$ Then $\parallel P_{T} (I) \parallel < 1$ would imply that $I - P_{T} (I)$ is invertible and $H_1 = H.$ Since we have assumed that $H$ is infinite dimensional,  so the above implies that $\parallel P_{T} (I) \parallel = 1.$ This study is motivated by \cite {DC} where these questions were investigated for $d = 1.$ In section $2,$ we find upper bounds for the defect dimensions, show that the defect sequence is non-decreasing and investigate stability of the defect sequence. In this section we also show that for a pure tuple, the defect sequence is strictly increasing. In section $3,
$ we investigate the maximal non-commuting contractive tuples. Section $4$ deals with commuting ones and section $5$ with the pure ones.\\
\textbf {Any tuple $T$ considered in this note is contractive and satisfies $0< \Delta _{T} < \infty.$ }

\section{Properties of defect dimensions}
\begin{lemma}
 The sequence $\{ \Delta_{T^n} : n=1,2,\cdots \}$ is increasing in $n$  and
 $ \Delta_{T^n} \le (1 + d + \cdots +d^{n-1}) \Delta _T$ for each positive integer $n$ .\end{lemma}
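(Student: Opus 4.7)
The plan is to exploit the telescoping identity
\[ I - P_T^n(I) = \sum_{k=0}^{n-1}\bigl(P_T^k(I) - P_T^{k+1}(I)\bigr) = \sum_{k=0}^{n-1} P_T^k(D^2), \]
where $D^2 := I - P_T(I)$, together with the elementary observation that if a positive operator $A$ is written as a finite sum $A = \sum_k B_k$ of positive operators, then $\ker A = \bigcap_k \ker B_k$ (since $\langle Ax,x\rangle$ is a sum of non-negative terms), and consequently $\overline{\mathrm{Range}\,A} = \overline{\sum_k \overline{\mathrm{Range}\,B_k}}$.

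For the monotonicity claim, since $P_T$ is completely positive and $P_T(I)\le I$, iterating yields $P_T^{n+1}(I)\le P_T^n(I)$, so $I - P_T^{n+1}(I) \ge I - P_T^n(I) \ge 0$. This inequality forces $\ker\bigl(I - P_T^{n+1}(I)\bigr) \subseteq \ker\bigl(I - P_T^n(I)\bigr)$, whence $H_n \subseteq H_{n+1}$ and $\Delta_{T^n} \le \Delta_{T^{n+1}}$.

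For the upper bound, expand $P_T^k(D^2) = \sum_{|\alpha|=k} T_\alpha D^2 T_\alpha^*$, the sum running over the $d^k$ multi-indices $\alpha=(i_1,\ldots,i_k) \in \{1,\ldots,d\}^k$ with $T_\alpha = T_{i_1}\cdots T_{i_k}$. Applying the standard fact $\overline{\mathrm{Range}(CC^*)} = \overline{\mathrm{Range}(C)}$ with $C = T_\alpha D$ gives $\overline{\mathrm{Range}(T_\alpha D^2 T_\alpha^*)} \subseteq \overline{T_\alpha H_1}$, a subspace of dimension at most $\dim H_1 = \Delta_T$. Combining the telescoping identity with the kernel-of-a-sum observation,
\[ H_n \;=\; \overline{\mathrm{Range}\bigl(I - P_T^n(I)\bigr)} \;\subseteq\; \overline{\sum_{k=0}^{n-1}\sum_{|\alpha|=k} T_\alpha H_1}, \]
and counting the $1 + d + \cdots + d^{n-1}$ summands, each contributing at most $\Delta_T$ to the dimension, yields the stated inequality.

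The only delicate point I foresee is the kernel-of-a-sum step for positive operators; one must notice it in order to avoid a tempting but invalid triangle-inequality argument on closures of ranges of general (not necessarily positive) operators. Once that observation is in place, the rest is bookkeeping with the telescoping formula and the multi-index expansion of $P_T^k$.
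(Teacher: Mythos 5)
Your proof is correct. Both the monotonicity argument (via $0\le I-P_T^n(I)\le I-P_T^{n+1}(I)$ and the resulting kernel containment) and the upper bound rest on the same telescoping identity the paper uses, namely $I-P_T^n(I)=\sum_{k=0}^{n-1}P_T^k\bigl(I-P_T(I)\bigr)$; the difference is one of organization. The paper telescopes only one step at a time, packaged as the product-tuple inequality $\Delta_{BC}\le \Delta_B+m\Delta_C$, and then runs an induction on $n$ to accumulate the bound $(1+d+\cdots+d^{n-1})\Delta_T$. You unroll the telescoping completely, expand $P_T^k(D^2)=\sum_{|\alpha|=k}T_\alpha D^2T_\alpha^*$ in multi-indices, and count the $d^k$ ranges $\overline{T_\alpha H_1}$ directly; this avoids the induction and, as a bonus, already yields the containment $H_n\subseteq H_1\vee\bigvee_{1\le|\alpha|\le n-1}T_\alpha H_1$, which the paper only establishes later (Theorem 2.2 and its corollary) by a separate argument. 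One remark on the point you flag as delicate: the kernel-of-a-sum observation for positive operators is correct but is not actually needed for the inequality direction. The inclusion $\mathrm{Range}\bigl(\sum_kB_k\bigr)\subseteq\sum_k\mathrm{Range}(B_k)$ holds for arbitrary bounded operators, since $\bigl(\sum_kB_k\bigr)x=\sum_kB_kx$, and this is exactly the elementary inclusion the paper invokes in its step (2.4); positivity is only required if you want the reverse inclusion, i.e.\ equality of the spans, which is more than the lemma demands.
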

 \begin{proof}
 First note that, $I \geq P_T(I) \geq P_T^2(I)\geq \cdots$
so that $$0 \leq I - P_T(I) \leq  I - P_T^2(I) \leq \cdots.$$ So  $0 < \Delta _T \leq \Delta _{T^2} \leq \cdots$. This shows that the sequence $\{ \Delta_{T^n} : n=1,2,\cdots \}$ is increasing in $n$.
Now, note that if $A = BC$ as in (\ref{product}) above, where $B$ and $C$ are contractive tuples, then
\begin{equation} \label{CandA} \Delta_B \le \Delta_A. \end{equation} Indeed,
\begin{equation} \label{BandC} I - P_{BC(I)} =  I - P_B(P_C(I)) =  I - P_B(I) + P_B(I) - P_B(P_C(I))
= I - P_B(I) + P_B(I  - P_C(I))  \end{equation}
This implies that $ I - P_{BC(I)} \ge I - P_B(I)$ and hence $\Delta_{BC} \ge \Delta_B$. We are done. Secondly,
\begin{equation} \label{dbcledc+kdb}\Delta_{BC} \le \Delta_B + m \Delta_C. \end{equation} This again follows from (\ref{BandC}). The equation (\ref{BandC}) implies that
\begin{equation} \label{BandCagain}  \mbox{ Range } (I - P_{BC}(I) ) \subseteq \mbox{ Range } (I - P_{B}(I) ) + \mbox{ Range } (P_B(I - P_{C}(I)) ). \end{equation}
The second term on the right side is contained in $  \sum_{i=1}^m $ Range $B_i ( I - P_C(I) ) B_i^*$. So dim $ \overline{\mbox{ Range }} (P_B(I - P_C(I)) \le m \Delta_C$ and so taking dimensions of both sides in (\ref{BandCagain}), we have (\ref{dbcledc+kdb}).

We are ready to finish the proof. Applying ( \ref{dbcledc+kdb}) to $B=C=T$,  we get
$$ \Delta_{T^2} \le \Delta_T + d \Delta_T = (d+1) \Delta_T.$$ Similarly, applying ( \ref{dbcledc+kdb}) with $B=T$ and $C=T^2$, we get $$\Delta_{T^3} \le \Delta_T + d \Delta_{T^2} \le \Delta_T + d(d+1) \Delta_T = (d^2 + d + 1) \Delta_T.$$
Now a simple induction argument gives that $\Delta_{T^n} \le (d^{n-1} + d^{n-2} + \cdots + d + 1 ) \Delta_T $.
 \end{proof}

We shall give a description of $H_n$ and thereby we shall give a result concerning the stability of the sequence $\Delta_{T^n}$. For that, we need to develop some notations.
Given a contractive tuple $T = (T_1, T_2, \cdots ,T_d)$,~let $(T,1):H \bigoplus \cdots \bigoplus H = H^d \rightarrow H$ be  given by $$(T,1) \begin{pmatrix}
x_1\\x_2\\ \vdots\\ x_d
\end{pmatrix} = T_1x_1+\cdots+T_dx_d.$$ So $(T,1)^*:H \rightarrow H^d$ is given by $$(T,1)^*x = \begin{pmatrix}
T_1^*x\\ \vdots\\T_d^* x
\end{pmatrix}.$$ and $(T,1)(T,1)^*: H \rightarrow H$ is given by $(T,1)(T,1)^*x = P_T(I)x$. Now,$$(T,d):\underbrace{H^d\bigoplus \cdots \bigoplus H^d}_d = H^{d^2} \longrightarrow H^d$$ is defined by $$(T,d) \begin{pmatrix}
x_1\\ \vdots \\ x_d\\ x_{d+1}\\ \vdots \\x_{2d}\\ \vdots \\ \vdots \\x_{(d-1)d} \\ \vdots \\x_{d^2}
\end{pmatrix} = \begin{pmatrix}(T,1) \begin{pmatrix}
x_1\\x_2\\ \vdots\\ x_d
\end{pmatrix}\\ \vdots \\ \vdots \\ (T,1)\begin{pmatrix}x_{(d-1)d} \\ \vdots \\x_{d^2}
\end{pmatrix}
\end{pmatrix}$$
Thus $(T,d)^* : H^d \rightarrow H^{d^2}$ satisfies $$(T,d)^* \begin{pmatrix}
x_1\\x_2\\ \vdots\\ x_d
\end{pmatrix} = \begin{pmatrix}
(T,1)^*x_1\\ \vdots\\ (T,1)^*x_d
\end{pmatrix}$$
and $(T,d)(T,d)^* : H^d \rightarrow H^d$ is given by $$(T,d)(T,d)^* \begin{pmatrix}
x_1\\x_2\\ \vdots\\ x_d
\end{pmatrix} = \begin{pmatrix}
(T,1)(T,1)^*x_1\\ \vdots\\(T,1)(T,1)^* x_d
\end{pmatrix}. $$ Similarly we can define $(T,d^k) : H^{d^{(k+1)}} \rightarrow H ^{d^k}$ for $k= 1,2,\cdots .$

 Now consider the $d^2$ length tuple $T^2.$  Then $(T^2,1) : H^{d^2} \rightarrow H$ is defined by $$(T^2,1)\begin{pmatrix}
x_1\\x_2\\ \vdots\\ x_{d^2}
\end{pmatrix} = T_1^2 x_1+ \cdots +T_1 T_d x_d + \cdots+ T _d^2 x_{d^2} $$ and $(T^2,1)^*: H \rightarrow H^{d^2}$ is given by  $$(T^2,1)^* x = \begin{pmatrix} T_1^{2^*}x\\ \vdots\\ T_d^{2^*}x \end{pmatrix}.$$  $(T^2,1) (T^2,1)^* : H \rightarrow H $ satisfies $$(T^2,1) (T^2,1)^* x =P_T^2(I)x .$$  $(T^2,d):\underbrace{H^{d^2}\bigoplus \cdots \bigoplus H^{d^2}}_d = H^{d^3} \longrightarrow H^d $ is defined by $$(T^2 , d) \begin{pmatrix} \underline{x_1} \\ \vdots \\ \underline{x_d}
\end{pmatrix} = \begin{pmatrix}
(T^2,1)\underline{x_1} \\ \vdots \\ (T^2,1)\underline{x_d}
\end{pmatrix}$$ where ${\underline{x_1},\cdots,\underline{x_d}} \in H^{d^2}$ and $(T^2,d)^* : H^d \rightarrow H^{d^3} $ is given by $$(T^2,d)^* \begin{pmatrix}
x_1\\x_2\\ \vdots\\ x_d
\end{pmatrix} = \begin{pmatrix}(T^2,1)^* x_1 \\\vdots \\(T^2,1)^*x_d
\end{pmatrix}.$$ Similarly, we can define $(T^2,d^k) : H^{d^{k+2}} \rightarrow H^{d^k}$ for $k= 1,2,\cdots .$ In the same fashion, we can define $$(T^k,1) : H^{d^k} \rightarrow H.$$
\begin{theorem}
The defect spaces $H_n$ have the following properties :\begin{enumerate} \item $H_n \subseteq H_{n+1}$  for all $n=1 , 2 ,\cdots.$
\item $H_n = H_1 \vee (T,1) A^d \vee (T^2,1) A^{d^2} \vee \cdots \vee (T^{n-1},1) A^{d^{n-1}}$ for all $n= 2,3,\cdots$ where $$A^{d^k} = \overline{\mbox {Range}}(I_{H^{d^k}} - (T ,d^k) (T,d^k)^*) \subseteq H^{d^k}$$ and $(T^k ,1)A^{d^{k}} \subseteq H_{k+1}$  for all $ k=1,2,\cdots. $
\end{enumerate}
\end{theorem}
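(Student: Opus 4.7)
Part (i) is essentially contained in the previous lemma: we already observed $0 \le I - P_T^n(I) \le I - P_T^{n+1}(I)$. Since for positive operators $0 \le A \le B$ one has $\ker B \subseteq \ker A$ (because $\ip{Ax}{x} \le \ip{Bx}{x} = 0$ forces $Ax = 0$), passing to orthogonal complements gives $\overline{\mathrm{Range}}(A) \subseteq \overline{\mathrm{Range}}(B)$, i.e.\ $H_n \subseteq H_{n+1}$.

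The heart of (ii) is a telescoping identity together with a careful algebraic identification of the summands. First I would telescope
\[
I - P_T^n(I) \;=\; \sum_{k=0}^{n-1} \bigl( P_T^{k}(I) - P_T^{k+1}(I) \bigr).
\]
Each summand is a positive operator, so by the general principle $\ker\!\bigl(\sum_k A_k\bigr) = \bigcap_k \ker A_k$ for positive $A_k$, I get
\[
H_n \;=\; \overline{\mathrm{Range}}\bigl(I - P_T^n(I)\bigr) \;=\; \bigvee_{k=0}^{n-1} \overline{\mathrm{Range}}\bigl(P_T^{k}(I) - P_T^{k+1}(I)\bigr).
\]

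The key algebraic lemma I would establish next is the factorisation
\[
(T^k,1)\,(T,d^k) \;=\; (T^{k+1},1),
\]
which is a straightforward but crucial bookkeeping check with the multi-index notation introduced in the paper: applying $(T,d^k)$ contracts the last coordinate by one $T_j$ (blockwise), and then $(T^k,1)$ prepends the product $T_{i_1}\cdots T_{i_k}$. Consequently $(T^k,1)(T,d^k)(T,d^k)^*(T^k,1)^* = (T^{k+1},1)(T^{k+1},1)^* = P_T^{k+1}(I)$, and since $(T^k,1)(T^k,1)^* = P_T^k(I)$,
\[
P_T^{k}(I) - P_T^{k+1}(I) \;=\; (T^k,1)\bigl[I_{H^{d^k}} - (T,d^k)(T,d^k)^*\bigr](T^k,1)^*.
\]
Writing $E_k := I_{H^{d^k}} - (T,d^k)(T,d^k)^* \ge 0$ and $B_k := (T^k,1)\,E_k^{1/2}$, this says $B_k B_k^* = P_T^k(I) - P_T^{k+1}(I)$, hence
\[
\overline{\mathrm{Range}}\bigl(P_T^k(I) - P_T^{k+1}(I)\bigr) \;=\; \overline{\mathrm{Range}}\,B_k \;=\; \overline{(T^k,1)\,\overline{\mathrm{Range}}\,E_k^{1/2}} \;=\; \overline{(T^k,1)\,A^{d^k}},
\]
using that $\overline{\mathrm{Range}}\,E_k^{1/2} = \overline{\mathrm{Range}}\,E_k = A^{d^k}$ for positive $E_k$, and that $\overline{T\,\overline{X}} = \overline{TX}$ for bounded $T$. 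For $k=0$ this reproduces the base case $A^{d^0} = H_1$. Substituting back yields exactly the claimed decomposition
\[
H_n \;=\; H_1 \vee (T,1)A^d \vee (T^2,1)A^{d^2} \vee \cdots \vee (T^{n-1},1)A^{d^{n-1}}.
\]

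For the final inclusion $(T^k,1)A^{d^k} \subseteq H_{k+1}$, note that $0 \le P_T^k(I) - P_T^{k+1}(I) \le I - P_T^{k+1}(I)$ (as $P_T^k(I) \le I$), so the $\ker$-trick from part (i) gives the inclusion of the closed ranges, i.e.\ $\overline{(T^k,1)A^{d^k}} \subseteq H_{k+1}$. The main obstacle is the algebraic identity $(T^k,1)(T,d^k) = (T^{k+1},1)$: it is purely combinatorial but one must be patient with the recursive definition of $(T,d^k)$; everything else is standard functional-analytic manipulation of ranges of positive operators.
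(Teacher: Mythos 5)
Your proof is correct, and while it shares the paper's skeleton --- the telescoping $I - P_T^n(I) = \sum_{k=0}^{n-1}\bigl(P_T^k(I)-P_T^{k+1}(I)\bigr)$ and the factorisation $(T^{k+1},1)=(T^k,1)(T,d^k)$, which the paper also states (without detailed verification, just as you flag) --- the way you convert these identities into statements about closed ranges is genuinely different. The paper proves the two inclusions of (ii) separately: the inclusion $H_n \subseteq H_1 \vee (T,1)A^d \vee \cdots$ by chasing an element $x=(I-P_T^n(I))y$ through the telescoped sum, and the reverse inclusion by a separate argument showing that $(T^k,1)^*$ maps $\ker\bigl(I_H-(T^{k+1},1)(T^{k+1},1)^*\bigr)$ isometrically into $\ker\bigl(I_{H^{d^k}}-(T,d^k)(T,d^k)^*\bigr)$ and then passing to orthogonal complements. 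You instead get the equality in one stroke from two standard facts about positive operators, $\overline{\mathrm{Range}}\bigl(\sum_k A_k\bigr)=\bigvee_k\overline{\mathrm{Range}}(A_k)$ and $\overline{\mathrm{Range}}(BB^*)=\overline{\mathrm{Range}}(B)$, applied to $B_k=(T^k,1)E_k^{1/2}$. This buys you a cleaner argument that dispenses with the separate converse step (and in particular subsumes the final claim $(T^k,1)A^{d^k}\subseteq H_{k+1}$, for which your monotonicity observation $P_T^k(I)-P_T^{k+1}(I)\le I-P_T^{k+1}(I)$ also works); what it costs is that the identity $\overline{(T^k,1)A^{d^k}}=\overline{\mathrm{Range}}\,B_k$ requires the small extra step through $E_k^{1/2}$, whereas the paper's element-wise computation stays entirely at the level of the operators themselves. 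Your treatment of (i) via the order-reversal of kernels for $0\le A\le B$ is a clean repackaging of the paper's norm-chain argument and is equally valid.
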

\begin{proof}
$(i)$ First observe that $$(T^{n+1},1) = (T^n,1) (T,d^n)~ \mbox{and}~H_n = \overline{\mbox{ Range }} (I - (T^n,1)(T^n ,1)^* ).$$ Note  $x \in \mbox{ ker} (I_H - (T^{n+1},1)(T^{n+1},1)^*) ~\mbox{if and only if}~ \parallel (T^{n+1} ,1)^* x\parallel^2 = \parallel x \parallel^2 .$ Take $$ x \in \mbox{ ker} (I_H - (T^{n+1},1)(T^{n+1},1)^*).$$ Therefore $$\parallel (T^{n+1} ,1)^* x\parallel^2 = \parallel x \parallel^2.$$ So $$ \parallel x \parallel^2 = \parallel (T^{n+1} ,1)^* x\parallel^2
\leq \parallel (T^n ,1)^* x\parallel^2 \leq \cdots \leq \parallel (T ,1)^* x\parallel^2 \leq \parallel x \parallel^2 .$$
Thus we have $$ \parallel x \parallel^2 = \parallel (T^{n+1} ,1)^* x\parallel^2 = \parallel (T^n ,1)^* x\parallel^2
 = \cdots = \parallel (T ,1)^* x\parallel^2. $$
The above  implies $$x \in \mbox{ ker} (I_H - (T^n,1)(T^n ,1)^*).$$ Hence $ \mbox{ ker}(I_H - (T^n,1)(T^n ,1)^*)^\perp \subseteq  \mbox{ ker} (I_H - (T^{n+1},1)(T^{n+1} ,1)^*)^ \perp$. As a result $$H_n \subseteq H_{n+1}~\mbox{ for all}~ n=1 , 2 ,\cdots.$$

$(ii)$  Note that $A^{d^k} = H_1 \oplus \cdots \oplus H_1 $ ($d^k$ copies)  for $k= 1,2,\cdots .$\\  Take $$x \in \mbox {Range} (I_H - (T^n,1)(T^n ,1)^*). $$ which implies $$ x = (I_H - (T^n,1)(T^n ,1)^*)y.$$ Thus we have \begin{align*} x &= [I_H - (T,1)(T ,1)^*]y +[(T,1)(T ,1)^* -(T^2,1)(T^2 ,1)^*]y +[(T^2,1)(T^2 ,1)^* - (T^3,1)(T^3 ,1)^*]y\\ &+ \cdots   + [(T^{n-1},1)(T^{n-1} ,1)^* - (T^n,1)(T^n ,1)^*]y .
\end{align*}
So, \begin{align*} x&= [I_H - (T,1)(T ,1)^*]y + (T,1)[I_{H^d} - (T,d)(T,d)^*] (T,1)^*y + (T^2,1) [I_{H^{d^2}} - (T,d^2) (T,d^2)^*] (T^2,1)^*y\\ &+ \cdots + (T^{n-1},1)[I_{H^{d^{n-1}}} - (T,d^{n-1})(T,d^{n-1})^*](T^{n-1},1)^*y.\end{align*}

Hence, $$H_n \subseteq H_1 \vee (T,1) A^d \vee (T^2,1) A^{d^2} \vee \cdots \vee (T^{n-1},1) A^{d^{n-1}}~ \mbox{for all}~ n= 2,3,\cdots$$ where $A^{d^k} = \overline{\mbox {Range}}(I_{H^{d^k}} - (T ,d^k) (T,d^k)^*) \subseteq H^{d^k}$.  For the converse, we shall show that $$(T^k ,1)A^{d^{k}} \subseteq H_{k+1}~\mbox{  for all}~  k=1,2,\cdots. $$

Take $$x \in \mbox{ ker} (I_H - (T^{k+1},1)(T^{k+1},1)^*).$$  Therefore $$\parallel (T^{k+1} ,1)^* x\parallel^2 = \parallel x \parallel^2.$$ So $$ \parallel x \parallel^2 = \parallel (T^{k+1} ,1)^* x\parallel^2
= \parallel (T ,d^k)^* (T^k,1)^*x \parallel^2 \leq \parallel (T^k,1)^*x \parallel^2 \leq \cdots \leq  \parallel x \parallel^2 .$$ Which implies $$\parallel (T ,d^k)^* (T^k,1)^*x \parallel^2 = \parallel (T^k,1)^*x \parallel^2$$

Thus $$(T^k,1)^*x \in \mbox{ ker} (I_{H^{d^k}}- (T,d^k)(T,d^k)^*).$$ The above implies $(T^k,1)^*$ maps $\mbox{ker} (I_H - (T^{k+1},1)(T^{k+1},1)^*)$ to $\mbox{ker} (I_{H^{d^k}}- (T,d^k)(T,d^k)^*)$ isometrically for each $k \geq 0.$
Therefore $(T^k,1)$ maps $A^{d^k}$ to $H_{k+1}.$
 As a result we have $$H_n = H_1 \vee (T,1) A^d \vee (T^2,1) A^{d^2} \vee \cdots \vee (T^{n-1},1) A^{d^{n-1}}~ \mbox{ for all}~ n= 2,3,\cdots.$$
\end{proof}

\begin{corollary} $H_n= H_1 \vee_{i_{1}=1} ^{d} T_{i_{1}}{H_1} \vee_{i_1,i_2=1}^d T_{i_1}T_{i_2}H_1 \vee \cdots \vee_{i_1,\cdots,i_{n-1} = 1}^{d} T_{i_{1}} \cdots T_{i_{n-1}} H_1 $  for all $n\geq 2.$
\end{corollary}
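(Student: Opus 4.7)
The plan is to derive the corollary directly from part (ii) of the preceding Theorem by unpacking the action of the operator $(T^k,1)$ on the subspace $A^{d^k}$ of $H^{d^k}$. By the Theorem we already have
\[
H_n = H_1 \vee (T,1) A^d \vee (T^2,1) A^{d^2} \vee \cdots \vee (T^{n-1},1) A^{d^{n-1}},
\]
so the whole task is to identify each $(T^k,1) A^{d^k}$ with the corresponding join $\vee_{i_1,\ldots,i_k=1}^d T_{i_1}\cdots T_{i_k} H_1$.

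First I would record the block-diagonal structure already noted in the proof of the Theorem: $(T,d^k)(T,d^k)^*$ acts on $H^{d^k}$ coordinate-wise as $(T,1)(T,1)^* = P_T(I)$ on each factor $H$, so
\[
I_{H^{d^k}} - (T,d^k)(T,d^k)^* \;=\; \bigoplus_{1}^{d^k} \bigl(I_H - P_T(I)\bigr),
\]
and consequently $A^{d^k} = H_1 \oplus H_1 \oplus \cdots \oplus H_1$ ($d^k$ copies). This identification is the key step and the one where one must be careful with indexing; everything else is routine.

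Next I would read off the entries of the tuple $T^k$ from the definition in (\ref{product}): these are precisely the $d^k$ products $T_{i_1} T_{i_2} \cdots T_{i_k}$ with $1 \le i_1,\ldots,i_k \le d$, indexed in some fixed order. Hence, for a vector $\underline{h} = (h_{i_1,\ldots,i_k})_{i_1,\ldots,i_k=1}^d \in A^{d^k}$ with each $h_{i_1,\ldots,i_k} \in H_1$, the definition of $(T^k,1)$ gives
\[
(T^k,1)\underline{h} \;=\; \sum_{i_1,\ldots,i_k=1}^{d} T_{i_1}T_{i_2}\cdots T_{i_k} h_{i_1,\ldots,i_k}.
\]
Letting one coordinate range over $H_1$ while the others are zero shows that each individual vector $T_{i_1}\cdots T_{i_k}h$ with $h\in H_1$ lies in $(T^k,1)A^{d^k}$, and the displayed formula shows the reverse inclusion. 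Therefore
\[
(T^k,1)A^{d^k} \;=\; \bigvee_{i_1,\ldots,i_k=1}^{d} T_{i_1}\cdots T_{i_k} H_1.
\]

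Finally, substituting these identifications for $k = 1, 2, \ldots, n-1$ into the expression furnished by the Theorem yields the claimed formula for $H_n$. The only real obstacle in this argument is the coordinate-wise identification of $A^{d^k}$ with $d^k$ copies of $H_1$; once that is in place the remainder is a transparent rewriting of the Theorem's statement.
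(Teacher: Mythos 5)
Your proposal is correct and follows exactly the paper's route: the paper's own proof simply invokes Theorem 2.2(ii) together with the identification $A^{d^k} = H_1 \oplus \cdots \oplus H_1$ ($d^k$ copies), which is precisely your argument spelled out in more detail. The coordinate-wise unpacking of $(T^k,1)$ that you carry out explicitly is the intended (and only) content of the paper's one-line proof.
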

\begin{proof}
Directly follows from Theorem 2.2 and the fact that $A^{d^k} = H_1 \oplus \cdots \oplus H_1 $ ($d^k$ copies)  for $k= 1,2,\cdots .$
\end{proof}

The following theorem shows that the defect sequence either is strictly increasing or stabilises after finitely many steps.
\begin{theorem}If $\Delta _{T^n} = \Delta _{T^{n+1}} \mbox {for some n}, $ then $\Delta _{T^k} = \Delta _{T^n}$ for all $k \geq n .$
 \end{theorem}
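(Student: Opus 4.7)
The plan is to exploit the recursive description of the defect spaces provided by Corollary 2.3. Grouping the words $T_{i_1}\cdots T_{i_k}H_1$ appearing in the formula for $H_{n+1}$ according to their leading index $i_1$ yields the one-step recursion
$$ H_{n+1} \;=\; H_1 \;\vee\; \bigvee_{i=1}^{d} T_i H_n \qquad (n\geq 1),$$
which is the main technical ingredient. (For $n=1$ this reduces to the $n=2$ case of Corollary 2.3, and for $n\geq 2$ it follows by factoring out $T_{i_1}$ in each term of the formula for $H_{n+1}$.)

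Next I would upgrade the hypothesis $\Delta_{T^n}=\Delta_{T^{n+1}}$ to the set-theoretic equality $H_n=H_{n+1}$. By Lemma 2.1 we have $\Delta_{T^{n+1}}\leq (1+d+\cdots+d^n)\Delta_T<\infty$, so both $H_n$ and $H_{n+1}$ are finite-dimensional; combined with the inclusion $H_n\subseteq H_{n+1}$ from part (i) of Theorem 2.2, equal dimensions force equality. Feeding this back into the recursion and using $H_1\subseteq H_n$, I would then conclude that $T_iH_n\subseteq H_n$ for every $i=1,\ldots,d$; in other words, $H_n$ is jointly invariant under $T_1,\ldots,T_d$.

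With invariance in hand, I would finish by induction on $k\geq n$. The base case $k=n$ is the hypothesis. Assuming $H_k=H_n$, the recursion gives
$$H_{k+1}\;=\;H_1\;\vee\;\bigvee_{i=1}^{d} T_iH_k\;=\;H_1\;\vee\;\bigvee_{i=1}^{d} T_iH_n\;\subseteq\; H_n,$$
because $H_1\subseteq H_n$ and $T_iH_n\subseteq H_n$. Combining with $H_n=H_k\subseteq H_{k+1}$ yields $H_{k+1}=H_n$, whence $\Delta_{T^{k+1}}=\Delta_{T^n}$, and the induction closes.

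No step is a serious obstacle here: once one recognizes the recursion $H_{n+1}=H_1\vee\bigvee_i T_iH_n$ implicit in Corollary 2.3, the argument is essentially a two-line verification plus induction. The only point that requires care is upgrading equality of dimensions to equality of subspaces, which is why the finiteness guarantee from Lemma 2.1 is invoked explicitly.
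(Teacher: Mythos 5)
Your proof is correct and follows essentially the same route as the paper: the recursion $H_{n+1}=H_1\vee\bigvee_{i=1}^{d}T_iH_n$ you extract from Corollary 2.3 is exactly the content of the paper's displayed computation $(T^{n+1},1)A^{d^{n+1}}=(T,1)\bigl((T^n,1)A^{d^n}\oplus\cdots\oplus(T^n,1)A^{d^n}\bigr)$, and both arguments proceed by applying the $T_i$ to the stabilized equality $H_n=H_{n+1}$. Your packaging via the invariance $T_iH_n\subseteq H_n$ and an explicit induction on $k$ is a slightly cleaner write-up (and you are right to note that upgrading equal dimensions to equal subspaces uses the finiteness from Lemma 2.1 together with $H_n\subseteq H_{n+1}$), but it is not a different proof.
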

 \begin{proof}
 $\Delta _{T^n} = \Delta _{T^{n+1}}$ implies that $H_n = H_{n+1}.$ We shall show that $H_{n+2} = H_{n+1}.$ Now $H_n = H_{n+1}$ implies that $$(T^n,1) A^{d^n} \subseteq H_1 \vee (T,1) A^d \vee (T^2,1) A^{d^2} \vee \cdots \vee (T^{n-1},1) A^{d^{n-1}}.$$ To show $H_{n+1} = H_{n+2}$ we shall show that $$(T^{n+1},1) A^{d^{n+1}} \subseteq (T,1) A^d \vee (T^2,1) A^{d^2} \vee \cdots \vee (T^n,1) A^{d^n}.$$ Then it will follow that $H_{n+2} \subseteq H_{n+1}.$ From that it follows that $H_{n+1} = H_{n+2}.$

\small{\begin{align*} (T^{n+1},1) A^{d^{n+1}}& = (T,1)\begin{pmatrix}
(T^n,1) A^{d^n}\\ \vdots\\(T^n,1) A^{d^n}
\end{pmatrix}\\ &\subseteq (T,1) \begin{pmatrix}H_1 \vee (T,1) A^d \vee (T^2,1) A^{d^2} \vee \cdots \vee (T^{n-1},1) A^{d^{n-1}}\\ \vdots \\H_1 \vee (T,1) A^d \vee (T^2,1) A^{d^2} \vee \cdots \vee (T^{n-1},1) A^{d^{n-1}} \end {pmatrix} \\ &\subseteq (T,1) \begin{pmatrix}
H_1\\ \vdots\\H_1
\end{pmatrix} \vee (T,1) \begin{pmatrix}
(T,1) A^{d}\\ \vdots\\(T,1) A^{d}
\end{pmatrix} \vee \cdots \vee (T,1) \begin{pmatrix}
(T^{n-1},1) A^{d^{n-1}}\\ \vdots\\(T^{n-1},1) A^{d^{n-1}}
\end{pmatrix}\\& = (T,1) A^d \vee (T^2,1) A^{d^2} \vee \cdots \vee (T^n,1) A^{d^n}. \end{align*}}
  \end{proof}

 Here is a condition to ensure that the defect sequence is strictly increasing.
 \begin{lemma}
For a pure tuple $T,$ $\Delta_{T^n} <\Delta_{T^{n+1}}  $ for all $n.$
 \end{lemma}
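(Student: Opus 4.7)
The plan is to argue by contradiction, combining the stability theorem (Theorem~2.4) with the definition of pureness and the finiteness bound from the first lemma.

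First I would suppose, toward a contradiction, that $\Delta_{T^n}=\Delta_{T^{n+1}}$ for some $n$. Since the inclusion $H_n\subseteq H_{n+1}$ is already known from Theorem~2.2(i), and since the first lemma of Section~2 gives $\Delta_{T^n}\le(1+d+\cdots+d^{n-1})\Delta_T<\infty$, equality of dimensions of nested closed subspaces forces $H_n=H_{n+1}$. Then Theorem~2.4 applies and yields $H_k=H_n$ for every $k\ge n$.

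Next I would exploit pureness. By definition $P_T^k(I)\to 0$ strongly as $k\to\infty$, so for every $x\in H$ we have $(I-P_T^k(I))x\to x$ in norm. For $k\ge n$ the vector $(I-P_T^k(I))x$ lies in the range of $I-P_T^k(I)$, hence in $H_k=H_n$. Because $H_n$ is closed, the limit $x$ also lies in $H_n$. Thus $H_n=H$.

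Finally I would combine this with the dimension bound from the first lemma: $\dim H_n=\Delta_{T^n}\le(1+d+\cdots+d^{n-1})\Delta_T<\infty$, whereas by the standing hypothesis $H$ is infinite dimensional. This contradiction rules out the equality $\Delta_{T^n}=\Delta_{T^{n+1}}$, and since the sequence is nondecreasing we conclude $\Delta_{T^n}<\Delta_{T^{n+1}}$ for every $n$. The only subtle point is the passage from equality of finite dimensions to equality of the closed subspaces themselves, but this is immediate given the inclusion and finiteness, so I expect no serious obstacle.
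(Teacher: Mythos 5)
Your proof is correct and follows essentially the same route as the paper: assume equality at some index, invoke the stability theorem to freeze the defect spaces, use pureness to force $H_n=H$, and contradict the standing finiteness assumption $\Delta_T<\infty$ together with $\dim H=\infty$. The only difference is in how pureness is exploited --- the paper shows the kernel of $I-P_T^k(I)$ is trivial via the chain of norm equalities $\|x\|=\|(T^k,1)^*x\|=\cdots$ and strong convergence of $(T^m,1)^*$ to $0$, whereas you take limits of $(I-P_T^k(I))x$ inside the closed subspace $H_n$; both are valid and equally elementary.
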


\begin{proof}
Let there exist $k$ such that $\Delta_{T^k} = \Delta_{T^{k+1}}. $ Then from theorem 2.3 we have $\Delta_{T^n} = \Delta_{T^k} $ for all $n \geq k.$ Which implies $H_k = H_{k+1} = \cdots.$ As a result we have $$\mbox{ ker} (I_H - (T^k,1)(T^k,1)^*) = \mbox { ker} (I_H - (T^{k+1},1)(T^{k+1},1)^* = \cdots.$$ So taking $x\in \mbox{ ker} (I_H - (T^k,1)(T^k,1)^*)$ we have $$\parallel x \parallel = \parallel (T^k,1)^*x \parallel = \parallel (T^{k+1},1)^*x \parallel = \cdots . $$ Note that purity of $T$ is the same as $(T^n,1)^*$ converging to $0$ strongly. Since  $(T^n,1)^*$ strongly goes to zero, we have  $x=0.$ So we have $ \mbox {ker} (I_H - (T^k,1)(T^k,1)^*) = \{0\}.$ Which implies $H_k = H.$ So we have $\Delta _{T^k} = \mbox{dim} H = \infty ,$which is a contradiction. So we have $\Delta_{T^n} <\Delta_{T^{n+1}}  $ for all $n.$
 \end{proof}

Now we shall see a condition for which $\Delta_{T^n} = n.$

\begin{lemma}
If $\mbox{ dim} (H_1) =1, $  $\mbox{dim} {(T^n,1)A^{d^n} } = 1$ and $T$ is pure, then $\Delta_{T^n} = n.$
\end{lemma}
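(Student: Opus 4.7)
The plan is to obtain $\Delta_{T^n}=n$ by sandwiching: an upper bound from the structural description of $H_n$ in Theorem~2.2(ii), and a matching lower bound from the strict monotonicity in Lemma~2.4.

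For the upper bound, Theorem~2.2(ii) writes
$$H_n = H_1 \vee (T,1) A^d \vee (T^2,1) A^{d^2} \vee \cdots \vee (T^{n-1},1) A^{d^{n-1}}.$$
Since the dimension of a closed linear span of finitely many subspaces is at most the sum of their dimensions, the hypothesis $\dim H_1 = 1$ together with $\dim (T^k,1) A^{d^k}=1$ (which I read as holding for every $k$, since that is what makes the bound work for arbitrary $n$) immediately yields
$$\Delta_{T^n} \;=\; \dim H_n \;\le\; \dim H_1 + \sum_{k=1}^{n-1} \dim (T^k,1)A^{d^k} \;=\; 1 + (n-1) \;=\; n.$$

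For the lower bound, I would invoke Lemma~2.4: purity of $T$ forces $\Delta_{T^m} < \Delta_{T^{m+1}}$ for every $m\ge 1$. Since the $\Delta_{T^m}$ are positive integers and $\Delta_T = \dim H_1 = 1$, a one-line induction on $n$ gives $\Delta_{T^n} \ge n$. Combining the two inequalities yields the desired equality $\Delta_{T^n}=n$.

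The argument is essentially an assembly of results already proved; the only subtle point is interpreting the hypothesis correctly. The formal statement mentions just $\dim (T^n,1) A^{d^n}=1$, but to apply Theorem~2.2(ii) one needs this for every intermediate $k$, so the natural reading (and the only one that makes the lemma true as stated) is that the condition holds for all positive integers. No harder ingredient is required.
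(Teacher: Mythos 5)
Your proof is correct and follows essentially the same route as the paper: the upper bound $\Delta_{T^n}\le n$ from Theorem~2.2(ii) with the one-dimensionality hypotheses, and the lower bound from Lemma~2.4 (the paper phrases it as a contradiction via a repeated value in the nondecreasing integer sequence, which is equivalent to your direct induction). Your remark that the hypothesis must be read as $\dim (T^k,1)A^{d^k}=1$ for all $k$ matches what the paper's own proof implicitly uses.
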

\begin{proof}  We have seen that $H_n \subseteq H_1 \vee (T,1) A^d \vee (T^2,1) A^{d^2} \vee \cdots \vee (T^{n-1},1) A^{d^{n-1}}$ for all $n= 2,3,\cdots$ where $$A^{d^k} = \overline{\mbox {Range}}(I_{H^{d^k}} - (T ,d^k) (T,d^k)^*) \subseteq H^{d^k}.$$
Hence $ \Delta _{T^n} \leq n$ for all $n = 2,3,\cdots.$
If there exists $n_0$ such that $\Delta _{T^{n_0}} < n_{0},$then since $\Delta _T = 1$ and $\{\Delta _{T^n}\}$ is increasing, we have $\Delta _{T^{n_0 - 1}} = \Delta _{T^{n_0}},$ which can not happen from the previous lemma. So we have $\Delta_{T^n} = n.$
 \end{proof}

\begin{theorem}
$\Delta _{T^n} = dim (\overline{\mbox{Range}} (I_{H^{d^n}} - (T^n,1)^* (T^n,1))$ if and only if  $\mbox{dim} ( \mbox {ker} (T^n,1)) = \mbox  {dim }(\mbox{ ker }(T^n,1)^*).$
\end{theorem}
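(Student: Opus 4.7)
Write $S = (T^n,1): H^{d^n} \to H$. Then by definition $\Delta_{T^n} = \dim \overline{\mathrm{Range}}(I_H - SS^*)$, while the quantity on the right-hand side of the theorem is $\dim \overline{\mathrm{Range}}(I_{H^{d^n}} - S^*S)$. So the claim reduces to a general fact about an arbitrary bounded operator $S : K \to L$ between Hilbert spaces, namely
\[
\dim \overline{\mathrm{Range}}(I_L - SS^*) = \dim \overline{\mathrm{Range}}(I_K - S^*S)
\quad \Longleftrightarrow \quad
\dim \ker S = \dim \ker S^*.
\]
My plan is to prove this general statement.

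The first step is to split both spaces compatibly with $S$: decompose $L = \ker S^* \oplus \overline{\mathrm{Range}}\,S$ and $K = \ker S \oplus (\ker S)^\perp$. Each piece reduces the relevant operator: $SS^*$ vanishes on $\ker S^*$ and leaves $\overline{\mathrm{Range}}\,S$ invariant, and similarly $S^*S$ vanishes on $\ker S$ and leaves $(\ker S)^\perp$ invariant. Therefore $I_L - SS^*$ acts as the identity on $\ker S^*$, and as the operator $(I - SS^*)|_{\overline{\mathrm{Range}}\,S}$ on the complement, with the two pieces orthogonal in $L$. A short argument using orthogonality of the summands shows the closure of the range splits as an orthogonal direct sum, giving
\[
\dim \overline{\mathrm{Range}}(I_L - SS^*) \;=\; \dim \ker S^* + k_1,
\quad k_1 := \dim \overline{\mathrm{Range}}\bigl((I - SS^*)|_{\overline{\mathrm{Range}}\,S}\bigr).
\]
The identical reasoning on the $K$-side yields
\[
\dim \overline{\mathrm{Range}}(I_K - S^*S) \;=\; \dim \ker S + k_2,
\quad k_2 := \dim \overline{\mathrm{Range}}\bigl((I - S^*S)|_{(\ker S)^\perp}\bigr).
\]

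The main step, and really the only content, is to show $k_1 = k_2$. For this I would invoke the polar decomposition $S = U|S|$ with $|S| = (S^*S)^{1/2}$ and $U$ a partial isometry whose initial space is $(\ker S)^\perp$ and whose final space is $\overline{\mathrm{Range}}\,S$. Then $U$ restricts to a unitary isomorphism between these two subspaces, and the standard intertwining identity $U \,S^*S = SS^*\, U$ gives
\[
SS^*\big|_{\overline{\mathrm{Range}}\,S} \;=\; U \bigl(S^*S\big|_{(\ker S)^\perp}\bigr) U^*.
\]
Hence $I - SS^*$ restricted to $\overline{\mathrm{Range}}\,S$ is unitarily equivalent to $I - S^*S$ restricted to $(\ker S)^\perp$, from which $k_1 = k_2$ is immediate.

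Combining these, the two dimensions in question are $\dim \ker S^* + k_1$ and $\dim \ker S + k_1$, so they are equal precisely when $\dim \ker S = \dim \ker S^*$, which is the asserted equivalence. There isn't really a hard step here; the only delicate point is the orthogonal-splitting argument for the closed range, which needs the orthogonality of the two summands (used to conclude that if $x_n + y_n$ converges, both $x_n$ and $y_n$ converge separately) so that the closure respects the decomposition.
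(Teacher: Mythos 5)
Your argument is correct and is essentially the paper's own proof: the same decompositions $H^{d^n}=\ker S\oplus\overline{\mathrm{Range}}\,S^*$ and $H=\ker S^*\oplus\overline{\mathrm{Range}}\,S$, and the same use of the polar-decomposition partial isometry to identify $SS^*|_{\overline{\mathrm{Range}}\,S}$ with $S^*S|_{(\ker S)^\perp}$, are exactly what the paper uses (it proves the forward implication by assembling a global unitary $V\oplus W$ rather than by running your counting identity backwards, but the content is identical). The only point worth making explicit is that the final cancellation $\dim\ker S^*+k_1=\dim\ker S+k_1\Rightarrow\dim\ker S^*=\dim\ker S$ needs $k_1$ and the kernel dimensions to be finite, which follows from the standing assumption $\Delta_{T^n}<\infty$; the paper relies on this implicitly as well.
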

\begin{proof}
First we shall show that if $ \mbox{dim} (\mbox {ker} (T^n,1)) = \mbox{ dim} ( \mbox {ker} (T^n,1)^*)$ then there exists a unitary operator $U : H^{d^n} \rightarrow  H$ such that $(T^n , 1) = U ((T^n,1)^* (T^n,1)) ^{1/2}.$ First observe  $$ \parallel ((T^n,1)^* (T^n,1)) ^{1/2} x \parallel ^2 = \parallel (T^n,1)x \parallel ^2 . $$ The operator $V :\overline{\mbox {Range}} ((T^n,1)^* (T^n,1)) ^{1/2} \rightarrow \overline{\mbox {Range}} (T^n,1)  $ is an onto isometry. Also $$\overline{\mbox {Range}} ((T^n,1)^* (T^n,1)) ^{1/2} = \overline{\mbox {Range}} (T^n , 1) ^*. $$ So $V : \overline{\mbox {Range}} (T^n , 1) ^* \rightarrow \overline{\mbox {Range}} (T^n , 1)  $ is unitary and $ (T^n,1) = V ((T^n ,1)^* (T^n,1)) ^{1/2}.$ We write  $$H^{d^n} = \overline{\mbox {Range}} (T^n , 1) ^* \oplus \mbox{ ker} (T^n ,1). $$ $$H = \overline{\mbox {Range}} (T^n , 1)  \oplus \mbox{ ker} (T^n , 1) ^* .$$ If $ \mbox {dim} ( \mbox{ker} (T^n,1)) = \mbox{dim} (\mbox{ker} (T^n,1)^*)$ then there exists unitary $W : \mbox{ ker} (T^n ,
1) \rightarrow \mbox{ ker} (T^n,1) ^*.$ Thus $$ U = V \oplus W : \overline{\mbox {Range}} (T^n , 1) ^* \oplus \mbox{ ker} (T^n ,1) \rightarrow \overline{\mbox {Range}} (T^n , 1)  \oplus \mbox{ ker}(T^n , 1) ^*  $$  is unitary and  $ (T^n,1) = U((T^n ,1)^* (T^n,1)) ^{1/2}.$ Which implies $$ (T^n,1) (T^n,1) ^* = U (T^n,1) ^* (T^n,1) U ^*.$$ As a result we have if $\mbox {dim} (\mbox{ker} (T^n,1)) = \mbox{dim} (\mbox{ker} (T^n,1)^*)$ then $\Delta _{T^n} = \mbox {dim }(\overline{\mbox {Range}} (I_{H^{d^n}} - (T^n,1)^* (T^n,1)).$ For the converse if  $\Delta _{T^n} = \mbox{dim} (\overline{\mbox {Range}} (I_{H^{d^n}} - (T^n,1)^* (T^n,1))$   then  there exists unitary $U : H^{d^n} \rightarrow H$ such that $$(T^n,1) (T^n,1) ^* = U (T^n,1) ^* (T^n,1) U ^*.$$ First note that $(T^n ,1) ^* (T^n,1) | _{\overline{\mbox {Range}}(T^n,1) ^*}$ is unitarily equivalent with $(T^n,1) (T^n,1) ^* |_{\overline{\mbox {Range}}(T^n,1)}.$ Indeed, this can be seen by an argument similar to the proof of Lemma $1.4$ in \cite{UC}. Since $V 
^* \{(T^n,1) (T^n,1) ^* \} V = (T^n,1) ^* (T^n,1),$ where $V : \overline{\mbox {Range}} (T^n,1) ^* \rightarrow \overline{\mbox {Range}} (T^n,1)$ is defined as above,
 $$ (T^n,1) ^* (T^n,1)= A _1 \oplus 0 ~ \mbox{ on}~ \overline{\mbox {Range}} (T^n,1) ^* \oplus \mbox{ ker}(T^n,1)$$ and $$ (T^n,1) (T^n,1)^* = A _2 \oplus 0 ~ \mbox{ on}~ \overline{\mbox {Range}} (T^n,1)  \oplus \mbox{ ker}(T^n,1)^*.$$ Thus, $ \mbox{dim} (\overline{\mbox {Range}} (I_H - (T^n,1) (T^n,1) ^*))= \mbox{dim} (\overline{\mbox {Range}} (I_{H^{d^n}} - (T^n,1) ^* (T^n,1))) $ implies  that $\mbox{ dim} (\overline{\mbox {Range}} (I-A_1)) +\mbox{ dim} (\mbox{ker} (T^n,1)) =\mbox{dim} (\overline{\mbox {Range}} (I-A_2)) + \mbox{dim} (\mbox{ker} (T^n,1)^*).$ As a result, $\mbox{dim} (\mbox{ker} (T^n,1)) = \mbox{dim} (\mbox{ker} (T^n,1)^*).$
\end{proof}

\section{Maximal operator tuples}

\begin{definition}

Call an operator tuple $T=(T_1,\cdots,T_d)$ \textbf{ maximal}  if $ \Delta_{T^n} =
(1 + d + \cdots + d^{n-1}) \Delta _{T}$ for any positive integer
$n$.

\end{definition}

Given a Hilbert space $\mathcal{L}$, define the full Fock space
over $\mathcal{L}$ by
$$ \Gamma(\mathcal{L}) = \mathbb{C} \oplus \mathcal{L} \oplus
\mathcal{L}^{\otimes 2}\oplus \cdots \oplus \mathcal{L}^{\otimes
k} \oplus \cdots .$$

The one dimensional subspace $\mathbb{C} \oplus \{0\} \oplus
\{0\}\oplus \cdots $ is called the vacuum space  $\Omega$. The unit norm
element $(1, 0, 0, \ldots )$ is called the vacuum vector and is
denoted by $\omega$. The projection on to the vacuum space is
denoted by $E_0$.

Let $\{e_1, e_2, \ldots , e_d\}$ be an orthonormal basis of
$\mathbb{C}^d$. Then an orthonormal basis for the full tensor
product space $(\mathbb{C}^d)^{\otimes k}$ is $\{ e_{i_1} \otimes
\cdots \otimes e_{i_k} : 1 \le i_1, \ldots , i_k \le d \}$. Define
the {\em creation operator} tuple $V = (V_1,V_2, \ldots , V_d)$
on the full Fock space $ \Gamma(\mathbb{C}^d)$ by
$$V_i \xi = e_i \otimes \xi \mbox{ for } i = 1,2, \ldots ,d \mbox{ and }
\xi \in \Gamma(\mathbb{C}^d).$$
Needless to say that $e_i \otimes \omega$ is identified with $e_i$. It is easy to see that the $V_i$ are isometries with orthogonal ranges.

\begin{lemma}\label{eqn V}

The creation operator tuple $V = (V_1,\cdots,V_d)$ is maximal.

\end{lemma}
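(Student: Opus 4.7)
The plan is to compute both $\Delta_V$ and $\Delta_{V^n}$ directly from the structure of the Fock space, exploiting the fact that the creation operators are isometries with pairwise orthogonal ranges.

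First I would handle the base case $n=1$. Since $V_i^*V_j = \delta_{ij} I$ and the ranges of $V_1,\ldots,V_d$ are pairwise orthogonal, the operator $P_V(I) = \sum_{i=1}^d V_i V_i^*$ is precisely the orthogonal projection onto $\bigoplus_{k\ge 1}(\mathbb{C}^d)^{\otimes k}$. Therefore $I - P_V(I) = E_0$, the rank-one projection onto the vacuum space, and $\Delta_V = \dim H_1 = 1$.

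Next I would extend this to $V^n$. Using $P_V^n(I) = P_{V^n}(I)$ (already noted in the introduction), I need to understand $\sum_{1\le i_1,\ldots,i_n\le d} V_{i_1}\cdots V_{i_n}V_{i_n}^*\cdots V_{i_1}^*$. The key observation is that each word $V_{i_1}\cdots V_{i_n}$ is itself an isometry (composition of isometries) which maps $\xi$ to $e_{i_1}\otimes\cdots\otimes e_{i_n}\otimes \xi$. Different words have orthogonal ranges because $\{e_{i_1}\otimes\cdots\otimes e_{i_n}\}$ is orthonormal in $(\mathbb{C}^d)^{\otimes n}$. Hence the sum of the projections $V_{i_1}\cdots V_{i_n}V_{i_n}^*\cdots V_{i_1}^*$ is the orthogonal projection onto $\bigoplus_{k\ge n}(\mathbb{C}^d)^{\otimes k}$, so
\begin{equation*}
I - P_V^n(I) = \text{projection onto } \bigoplus_{k=0}^{n-1}(\mathbb{C}^d)^{\otimes k}.
\end{equation*}
Taking dimensions, $\Delta_{V^n} = \sum_{k=0}^{n-1}\dim(\mathbb{C}^d)^{\otimes k} = 1 + d + \cdots + d^{n-1}$.

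Finally, since $\Delta_V = 1$, the equality $\Delta_{V^n} = (1 + d + \cdots + d^{n-1})\Delta_V$ holds for every $n$, which is exactly the maximality condition of Definition~3.1. There is no serious obstacle here; the whole proof is essentially a direct orthogonality calculation. The only small care needed is to verify that the words $V_{i_1}\cdots V_{i_n}$ indeed have pairwise orthogonal ranges, which is immediate from the tensor-product description of the creation operators.
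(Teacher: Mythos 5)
Your proposal is correct and follows essentially the same route as the paper: both identify $I - P_V^n(I)$ as the orthogonal projection onto $\mathbb{C}\oplus\mathbb{C}^d\oplus\cdots\oplus(\mathbb{C}^d)^{\otimes(n-1)}$ and read off the dimension $1+d+\cdots+d^{n-1}$. Your explicit remark that distinct words $V_{i_1}\cdots V_{i_n}$ are isometries with pairwise orthogonal ranges just makes precise the step the paper dispatches with ``in a similar vein.''
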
 \begin{proof}

  \noindent  We compute the action of $V_i^*$ on the
orthonormal basis elements: \begin{align*}
  \langle  V_i^* ( e_{i_1} \otimes
e_{i_2} \otimes \cdots \otimes e_{i_k} ) , \xi \rangle & =
\langle   e_{i_1} \otimes e_{i_2} \otimes \cdots \otimes e_{i_k}
 , V_i \xi \rangle \\
 & =  \langle   e_{i_1} \otimes e_{i_2} \otimes \cdots \otimes e_{i_k} ,
e_i \otimes \xi \rangle \\
 & =  \begin{cases}
\langle  e_{i_2} \otimes \cdots \otimes e_{i_n} , \xi \rangle &\
\textup{if}~
i_1 = i \\
0 &\ \textup{if}~  i_1 \neq i. \end{cases}. \end{align*}

Thus \begin{eqnarray} \label{Vi*} V_i^* ( e_{i_1} \otimes e_{i_2}
\otimes \cdots \otimes e_{i_k} ) =  \begin{cases}  e_{i_2} \otimes
\cdots \otimes e_{i_k} &\
\textup{if}~i_1 = i \\
0 &\ \textup{if}~ i_1 \neq i. \end{cases}  .
\end{eqnarray} Hence it follows that $\sum V_iV_i^* ( e_{i_1}
\otimes e_{i_2} \otimes \cdots \otimes e_{i_k} ) =  e_{i_1}
\otimes e_{i_2} \otimes \cdots \otimes e_{i_k}$ for any $k \ge 1$
and $1 \le i_1, \ldots ,i_k \le d.$

Now
$$\langle  V_i^* \omega , \xi \rangle  = \langle  \omega , e_i \otimes
\xi \rangle  = 0 \mbox{ for any } \xi \in \Gamma(\mathbb{C}^d)
\mbox{ and any } i.$$

Thus  $\sum V_iV_i^* ( \omega ) = 0$ and hence
$I - \sum V_iV_i^* $ is the
$1$-dimensional projection onto the vacuum space.   In a similar
vein, it is easy to see that
$$ I - \sum_{1 \le i_1, i_2, \ldots ,i_n \le d} V_{i_1} V_{i_2}
\ldots V_{i_n} V_{i_n}^* \ldots V_{i_2}^* V_{i_1}^* $$ is the
projection onto $\mathbb{C} \oplus \mathbb{C}^d \oplus
(\mathbb{C}^d)^{\otimes 2}\oplus \cdots \oplus (\mathbb{C}^d)^{\otimes n-1}$, the direct sum of $k$ particle spaces from $k=0$ to $n-1$.
This space has dimension $1 + d + \cdots +d^{n-1}$.
\end{proof}

\begin{lemma} \label{IO} If $W = (W_1, W_2,\cdots,W_d)$ is a contractive tuple consisting of isometries, then $W$ is maximal.

\begin{proof}
For this proof, we shall use the Wold decomposition of $W$ due to
Popescu. Theorem 1.3 of Popescu \cite{Pop-Isometric} says that the
Hilbert space $H$ decomposes into an orthogonal sum $H= H_0 \oplus
H_1$ such that $H_0$ and $H_1$ reduce each operator $W_i$ for
$i=1,2, \cdots,d$ and

\begin{enumerate}\item $(I-\sum_{i=1}^d W_iW_i^*)|_{H_1} =0$,
\item if $A_i = W_i|_{H_0}$, then the tuple $A$ is unitarily
    equivalent to the tuple consisting of $V_i\otimes
    I_{{\mathcal D}_{W^*}}$.
\end{enumerate}

Since the reducing subspace $H_1$ does not contribute to the
defect dimensions at all, we have the result.
\end{proof}

\end{lemma}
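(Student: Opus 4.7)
The plan is to invoke Popescu's Wold-type decomposition for row-isometric tuples and reduce the problem to Lemma \ref{eqn V} plus a ``Cuntz part'' that contributes nothing to the defects. First observe that the contractivity assumption $\sum_i W_i W_i^* \le I$ combined with $W_i^* W_i = I$ actually forces $W_j^* W_i = \delta_{ij} I$, so $W$ is in fact a \emph{row isometry} and Popescu's noncommutative Wold decomposition (Theorem~1.3 of \cite{Pop-Isometric}) is available. It produces an orthogonal decomposition $H = K_0 \oplus K_1$ into subspaces reducing every $W_i$, with $\sum_{i=1}^{d}(W_i|_{K_1})(W_i|_{K_1})^* = I_{K_1}$, while $W|_{K_0}$ is unitarily equivalent to $(V_1 \otimes I_{\mathcal{D}_{W^*}}, \ldots, V_d \otimes I_{\mathcal{D}_{W^*}})$ acting on $\Gamma(\mathbb{C}^d) \otimes \mathcal{D}_{W^*}$, where $\mathcal{D}_{W^*}$ is simply the first defect space $H_1$ of $W$; in particular $\dim \mathcal{D}_{W^*} = \Delta_W$.

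Since $K_0$ and $K_1$ reduce every $W_i$, the completely positive map $P_W$ splits as $P_{W|_{K_0}} \oplus P_{W|_{K_1}}$, and so does every iterate. On $K_1$ the relation $I - P_{W|_{K_1}}(I) = 0$ iterates to $I - P_{W|_{K_1}}^n(I) = 0$ for every $n$, so the $K_1$-summand contributes nothing to any defect space. Therefore $\Delta_{W^n} = \Delta_{(W|_{K_0})^n}$ for every $n \ge 1$, and the task reduces to computing the defect sequence of $V \otimes I_{\mathcal{D}_{W^*}}$.

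For this, the key identity is that for any auxiliary Hilbert space $L$,
\[
  P_{V \otimes I_L}^n(I) \;=\; P_V^n(I) \otimes I_L,
\]
so the $n$-th defect space of $V \otimes I_L$ is the tensor product of the $n$-th defect space of $V$ with $L$, of dimension $\Delta_{V^n} \cdot \dim L$. By Lemma \ref{eqn V}, $\Delta_{V^n} = 1 + d + \cdots + d^{n-1}$. Taking $L = \mathcal{D}_{W^*}$ and combining with the preceding paragraph gives
\[
  \Delta_{W^n} \;=\; (1 + d + \cdots + d^{n-1})\,\dim \mathcal{D}_{W^*} \;=\; (1 + d + \cdots + d^{n-1})\,\Delta_W,
\]
which is precisely maximality.

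The main obstacle is having the right Wold-type decomposition available in the noncommutative setting; once Popescu's theorem is invoked, the remainder is a routine tensor-product calculation. An alternative attack via Theorem 2.2---trying to show that the joins $H_1 \vee (W,1)A^d \vee \cdots \vee (W^{n-1},1)A^{d^{n-1}}$ are orthogonal direct sums when $W$ is a row isometry---would essentially amount to rederiving the Wold decomposition by hand, so leaning on Popescu's theorem is the cleaner route.
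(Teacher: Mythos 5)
Your proposal is correct and follows essentially the same route as the paper: invoke Popescu's Wold decomposition, discard the Cuntz summand since it contributes nothing to any defect space, and reduce to the maximality of $V$ from Lemma \ref{eqn V} via the ampliation $V\otimes I_{\mathcal{D}_{W^*}}$. You simply make explicit two steps the paper leaves implicit, namely that contractivity plus isometry forces orthogonal ranges (so Popescu's theorem applies) and the identity $P_{V\otimes I_L}^n(I)=P_V^n(I)\otimes I_L$.
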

The next lemma shows that a maximal operator tuple restricted to a reducing subspace is also maximal.
\begin{lemma}
Let $(T_1,\cdots,T_d)$ be a maximal operator tuple and $M$ be a reducing subspace for each of $T_1,\cdots,T_d$,then the operator tuple $(T_1|_M,\cdots,T_d|_M)$ is also maximal. \end{lemma}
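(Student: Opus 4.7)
The plan is to decompose $T$ along the reducing subspace $M$ and its orthogonal complement, establish additivity of defect dimensions under this decomposition, and then combine with the upper bound from Lemma 2.1 to conclude that maximality is inherited by the restriction.

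First I would set $S_i = T_i|_M$ and $R_i = T_i|_{M^{\perp}}$, so that $T_i = S_i \oplus R_i$ on $H = M \oplus M^{\perp}$. Because $M$ reduces each $T_i$, we have $T_i^*|_M = (T_i|_M)^*$, and hence
$$T_iT_i^* = S_iS_i^* \oplus R_iR_i^*.$$
Summing over $i$ and iterating, a straightforward induction on $n$ gives
$$P_T^n(I_H) = P_S^n(I_M) \oplus P_R^n(I_{M^{\perp}}),$$
so that the $n$th defect space of $T$ splits as $H_n(T) = H_n(S) \oplus H_n(R)$, giving the additivity
$$\Delta_{T^n} = \Delta_{S^n} + \Delta_{R^n} \quad \text{for every } n \geq 1.$$

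For the key step, observe that the standing assumption $\Delta_T < \infty$ forces $\Delta_S, \Delta_R < \infty$ via $\Delta_T = \Delta_S + \Delta_R$. Set $c_n = 1 + d + \cdots + d^{n-1}$. Applying Lemma 2.1 separately to the contractive tuples $S$ and $R$ yields
$$\Delta_{S^n} \leq c_n \Delta_S \quad \text{and} \quad \Delta_{R^n} \leq c_n \Delta_R.$$
Adding and using additivity,
$$\Delta_{T^n} = \Delta_{S^n} + \Delta_{R^n} \leq c_n(\Delta_S + \Delta_R) = c_n \Delta_T.$$
But maximality of $T$ says the extreme ends are in fact equal; hence the two intermediate inequalities must individually be equalities, since finite nonnegative numbers $a \leq A$ and $b \leq B$ with $a + b = A + B$ force $a = A$ and $b = B$. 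In particular $\Delta_{S^n} = c_n \Delta_S$ for every $n$, showing that $(T_1|_M, \ldots, T_d|_M)$ is maximal.

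The only point needing genuine care is the additivity in the first paragraph, and this is precisely where the \emph{reducing} (rather than merely invariant) hypothesis on $M$ is used: without it the formula $T_i^*|_M = (T_i|_M)^*$ would fail and $P_T^n(I)$ need not split as an orthogonal direct sum. Apart from that, the proof is a clean pigeonhole argument combining Lemma 2.1 with the definition of maximality, and the finiteness of $\Delta_T$ guarantees that we may legitimately cancel the finite quantities $\Delta_R$ and $c_n \Delta_R$ from the two sides.
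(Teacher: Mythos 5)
Your proof is correct, but it takes a genuinely different route from the paper's. The paper never looks at $M^{\perp}$: it uses the description of the defect spaces from Theorem 2.2 and Corollary 2.3, namely $H_2 = H_1 \vee T_1(H_1) \vee \cdots \vee T_d(H_1)$, observes that the first defect space $H_1'$ of the restricted tuple sits inside $H_1$, and deduces from $\dim H_2 = (1+d)\Delta_T$ that each $T_i$ maps $H_1$ onto a subspace of full dimension $\Delta_T$ (hence injectively, so dimension is preserved on the subspace $H_1'$ as well); the maximal count for $H_n'$ then follows by the same independence bookkeeping at each stage. You instead decompose $T_i = S_i \oplus R_i$ along $H = M \oplus M^{\perp}$, prove the additivity $\Delta_{T^n} = \Delta_{S^n} + \Delta_{R^n}$, and squeeze against the universal bound $\Delta_{S^n} \le c_n\Delta_S$, $\Delta_{R^n} \le c_n\Delta_R$ from Lemma 2.1, using the standing finiteness $\Delta_T < \infty$ to turn the equality $\Delta_{S^n}+\Delta_{R^n} = c_n\Delta_S + c_n\Delta_R$ into termwise equalities. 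Your argument is arguably cleaner: it sidesteps the linear-independence accounting that the paper leaves largely implicit in its ``similarly it is easy to see'' step, and it yields for free the stronger conclusion that the complementary tuple $(T_1|_{M^{\perp}}, \ldots, T_d|_{M^{\perp}})$ is maximal as well. What the paper's approach buys in exchange is structural information internal to $M$ (the inclusion of defect spaces and the injectivity of each $T_i$ on $H_1$), which is in the spirit of the explicit defect-space computations used elsewhere in Section 3.
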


\begin{proof}
Let $\mbox{ dim }(H_1) = \mbox{ dim }\{ \overline{\mbox{ Range }} (I- \sum _{i=1} ^d T_i T_i^*)\} = n.$ The first defect space of the operator tuple $(T_1|_M,\cdots,T_d|_M)$ is given by \begin{eqnarray*} H_1^{'}&=& \overline{\mbox{ Range }}(I_M - \sum_{i=1} ^d(T_i|M) (T_i|M) ^*) \\&=& \overline{\mbox{ Range }} (I - \sum_{i=1} ^d T_iT_i^*)|_M.\end{eqnarray*} So $H_1^{'}  \subseteq H_1.$~ Now we know that the second defect space of the operator tuple $(T_1,\cdots,T_d)$ is given by \begin{eqnarray*} H_2 &=& H_1 \vee (T,1) A^d \\ &=& H_1 \vee T_1 (H_1) \vee \cdots \vee T_d(H_1).\end{eqnarray*} Since it is given that the operator tuple $(T_1,\cdots,T_d)$ is maximal, so $\mbox{ dim } (H_2) = (1+d)n= n+nd.$ Which implies $\mbox{ dim } \{T_i (H_1)\}= n  $ for all $i= 1,\cdots,d.$ Now the second defect space of the operator tuple $(T_1|_M,\cdots,T_d|_M)$ is given by $H_2^{'} ~=~ H_1^{'} \vee T_1|_{M} (H_1^{'})\vee \cdots \vee T_d|_{M}(H_1^{'}).$ Since $\mbox{ dim } \{T_i (H_1)\}= n  $ and $H_1^{'}  \subseteq H_1$, so 
$\mbox{ dim }\{T_i|_{M} (H_1^{'})\} = \mbox{ dim } {(H_1^{'} )}.$~
As a result $\mbox{ dim }{(H_2^{'})} = (1+d) \mbox{ dim } {(H_1^{'})}.$~ Similarly it is easy to see that $\mbox{ dim } (H_n ^{'}) = (1+d+\cdots+d^{n-1}) ~ \mbox {dim} (H_1^{'}).$ \end{proof}

We shall investigate the question of maximality of the tuple $(P_M V_1|_M,\cdots,P_M V_d|_M)$ where $M$ is either an invariant or a co-invariant subspace of $\Gamma (C^d).$
\begin{lemma}
If $M$ is an invariant subspace of~  $\Gamma (\mathbb{C}^d)$, then
the tuple $(V_1|_M,\cdots,V_d|_M)$ is maximal.
\end{lemma}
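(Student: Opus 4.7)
The plan is to reduce to the preceding Lemma~\ref{IO}, which asserts that any contractive tuple of isometries is automatically maximal. So the entire task is to verify that $(V_1|_M, \ldots, V_d|_M)$ has both of these properties.

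First, since $M$ is invariant under each $V_i$, the restriction $V_i|_M$ is a well-defined operator from $M$ into $M$, and since $V_i$ is an isometry on $\Gamma(\mathbb{C}^d)$, so is its restriction. Next I would check contractivity. The ranges $V_1\Gamma(\mathbb{C}^d), \ldots, V_d\Gamma(\mathbb{C}^d)$ are pairwise orthogonal, so the subspaces $V_1 M, \ldots, V_d M$ are also pairwise orthogonal inside $M$. A short computation gives $(V_i|_M)^{*} = P_M V_i^{*}|_M$, from which one sees that $(V_i|_M)(V_i|_M)^{*} = V_i P_M V_i^{*}|_M$ is the orthogonal projection of $M$ onto $V_i M$. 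Summing these mutually orthogonal projections yields
\[
\sum_{i=1}^{d} (V_i|_M)(V_i|_M)^{*} \;=\; P_{V_1 M \oplus \cdots \oplus V_d M} \;\leq\; I_M,
\]
which confirms that the restricted tuple is contractive.

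With both properties verified, Lemma~\ref{IO} delivers maximality of $(V_1|_M, \ldots, V_d|_M)$ at once. I do not anticipate a real obstacle: the substance has already been absorbed into Lemma~\ref{IO} via Popescu's Wold decomposition, and the present lemma is essentially a one-line consequence once that reduction is in place. By contrast, the co-invariant analogue hinted at in the paragraph preceding this lemma should be genuinely more delicate, because the restriction of $V_i$ to a co-invariant subspace must be compressed by $P_M$ and is therefore not an isometry, so Lemma~\ref{IO} will no longer apply directly and a separate argument will be required there.
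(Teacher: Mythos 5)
Your proposal is correct and follows exactly the paper's route: the paper's proof is the one--line observation that $(V_1|_M,\ldots,V_d|_M)$ is a contractive tuple of isometries, so Lemma~3.3 (maximality of isometric contractive tuples via Popescu's Wold decomposition) applies. Your added verification that the restrictions are isometries and that the restricted tuple is contractive is a correct filling-in of details the paper leaves implicit.
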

\begin{proof}
Since the tuple $(V_1|_M,\cdots,V_d|_M)$ is a contractive tuple consisting of isometries, so by lemma \ref{IO} the tuple is maximal.
\end{proof}

The situation for co-invariant subspaces is starkly different. In the rest of this section, we give an example of a co-invariant subspace such that the compression of the creation operators to this subspace is not maximal. We also give an example of a co-invariant subspace for which it is maximal.

Here we shall introduce the multi-index notation. Let $\Lambda$ denote the set $\{1,2,\cdots,d\}$ and $\Lambda ^m$ denote the $m-$ fold cartesian product of $\Lambda$ for $m \geq 1.$ Let $\tilde{\Lambda}$ denote $\bigcup_{m=0} ^ \infty \Lambda ^m , $ where $\Lambda ^0$ is just the set $\{0\}$ by convention. For $\alpha \in \tilde{\Lambda},$   $e _\alpha$ denote the vector $e_{\alpha_1} \otimes e_{\alpha _2} \otimes \cdots \otimes e_{\alpha _m}$ in the full Fock space $\Gamma (\mathbb C ^d),$ and $e_0$ is the vacuum vector $\omega .$

The following shows that there exists co-invariant subspace $M
\subseteq \Gamma (\mathbb{C}^d)$ such that the operator tuple
$(P_M V_1|_M,\cdots,P_M V_d|_M)$ is not maximal.

Let us consider the operator tuple $(R_1,\cdots,R_d)$ on $\Gamma (\mathbb{C}^d)$
where each $R_j: \Gamma(\mathbb{C}^d)\longrightarrow \Gamma(\mathbb{C}^d)$ is given by
$$R_j(\xi) = \xi \otimes e_j, ~\mbox{for } j = 1,2,\cdots,d ~\mbox{and} ~ \xi \in \Gamma (\mathbb{C}^d).$$
Note that $V_iR_j = R_jV_i$ for all $i,j \in \{1,2,\cdots,d\}.$
Fix any $j \in \{1,2,\cdots,d\}$ and consider the subspace $M ^
\perp \subseteq  \Gamma (\mathbb C ^d)$ given by  $M^\perp=
\overline{\mbox{ Range }} R_j.$ So the subspace $M ^ \perp$ is
invariant  under each $V_i.$ The orthonormal basis of $M^\perp$ is
given by $\{e_\alpha  \otimes e_j , \mbox{for all}~ \alpha \in
\tilde {\Lambda}\}.$
 Let us consider the operator tuple $ T= (P_M V_1|_M,\cdots,P_M V_d|_M).$
 The first defect space of the tuple is given by
 $H_1 = \overline{\mbox{ Range }} (P_M - \sum_{i=1} ^d P_M V_iP_MV_i ^*P_M).$~
 The operator  \begin {eqnarray*}    P_M - \sum_{i=1} ^d P_M V_iP_MV_i ^*P_M &=&
 P_M (I-\sum_{i=1} ^d V_i V_i ^*)P_M \\ &=& P_M E_0 P_M \\ &=& E_0.
  \end{eqnarray*}
  So $H_1 = \Omega$ and $\Delta _T= 1.$
  The second defect space of the tuple $T$ is given by
  $$H_2 = H_1 \vee P_M V_1P_M (H_1)\vee \cdots\vee P_M V_d P_M (H_1)
  = H_1 \vee P_M(e_1) \vee \cdots \vee P_M(e_d).$$ Now \begin {eqnarray*}  P_M(e_i)
  &=& e_i - P_{M^\perp} (e_i)\\ &=& e_i - \sum _ {\alpha \in \tilde{\Lambda} } \langle e_i,e_\alpha\otimes e_j\rangle e_\alpha\otimes e_j \\
  &=& e_i - \langle e_i , e_j \rangle e_j.\end{eqnarray*}
  So the second defect space has dimension $1+(d-1)=d.$ Hence the operator tuple $T$ is not maximal.\\
Our aim is to find a class of co-invariant subspaces
$M \subseteq \Gamma (\mathbb{C}^d)$ of the tuple $(V_1,\cdots,V_d)$ for which the tuple $(P_M V_1 |_M,\cdots,P_M V_d |_M)$ is maximal.\\
 Take an inner function $\varphi \in \Gamma (\mathbb{C}^d)$ which is given by $\varphi = \sum _{\alpha \in \tilde{\Lambda} } \lambda _\alpha e_\alpha$
 such that $\lambda _0 =0$ and $\lambda _\alpha \neq 0 $ for infinitely many $\alpha.$
 (See \cite {AP} for the definition of inner function.)
  Note since $\varphi$ is inner the multiplication operator $M_\varphi: \Gamma (\mathbb{C}^d) \longrightarrow \Gamma (\mathbb{C}^d) $
  given by $M_\varphi (\eta)= \eta \otimes \varphi$ is an isometry.\\ Consider the closed subspace $M^\perp \subseteq \Gamma (\mathbb{C}^d)$
given by $M^ \perp = \Gamma (\mathbb{C}^d) \otimes \varphi. $ The
subspace $M^\perp$ is invariant under each $V_i$ and it has
orthonormal basis given by $\{e_\alpha \otimes \varphi ; \alpha \in
\tilde{\Lambda}\}.$ Our claim is the tuple $T=(P_M
V_1|_M,\cdots,P_M V_d|_M )$ is maximal. The first defect space of
the tuple $T$ is given by $$H_1 = \overline{\mbox{ Range }} (P_M -
\sum_{i=1} ^d P_M V_iP_MV_i ^*P_M).$$ The operator
\begin{eqnarray*} P_M - \sum_{i=1} ^d P_M V_iP_MV_i ^*P_M &=& P_M
(I-\sum_{i=1} ^d V_i V_i ^*)P_M \\ &=& P_M E_0 P_M \\ &=& E_0.
\end{eqnarray*} So $\Delta _T= 1$ and $H_1 = \Omega.$ The second
defect space of the tuple $T$ is given by $$H_2 = H_1 \vee P_M
V_1P_M (H_1)\vee \cdots\vee P_M V_d P_M (H_1) = H_1 \vee P_M(e_1)
\vee \cdots \vee P_M(e_d).$$  Here \begin{eqnarray*} P_M(e_i) &=&
e_i - P_{M^\perp} (e_i)\\ &=& e_i - \sum _{\alpha \in
\tilde{\Lambda} } \langle e_i , e_\alpha \otimes \varphi \rangle
e_\alpha \otimes \varphi\\ &=& e_i - \langle e_i, \varphi \rangle
\varphi \\ &=& e_i - \lambda _i \varphi.\end{eqnarray*} The
vectors $(e_1 - \lambda _1 \varphi , \cdots, e_d - \lambda _d
\varphi)$ are linearly independent, because $\varphi$ has
infinitely mane co-ordinates non zero. As a result the second
defect space has dimension $d+1.$ The third defect space of the
tuple $T$ is given by $$H_3 = H_1 \vee P_M V_1P_M (H_1)\vee
\cdots\vee P_M V_d P_M (H_1) \vee  (P_M V_1P_M)^2 (H_1) \vee
\cdots \vee(P_M V_dP_M)^2( H_1).$$ Now,
\begin{align*}
P_M V_i P_M V_j P_M (H_1) = P_M V_i (e_j - \lambda _j \varphi) &=  P_M (e_i \otimes e_j) - P_M \lambda _j (e_i \otimes \varphi)\\ &= P_M(e_i \otimes e_j) \\ &= (e_i \otimes e_j) - P_{M^\perp} (e_i \otimes e_j)\\&=  (e_i \otimes e_j) - \sum _{\alpha \in \tilde{\Lambda} }  \langle e_i \otimes e_j , e_ \alpha \otimes \varphi \rangle e_\alpha \otimes \varphi\\& =(e_i \otimes e_j) - \langle e_i \otimes e_j , \varphi \rangle \varphi - \lambda _j (e_i \otimes \varphi)\\& = (e_i \otimes e_j)- \lambda _{ij} \varphi - \lambda _j (e_i \otimes \varphi) .
\end{align*} Again the vectors $\{e_i \otimes e_j - \lambda _{ij} \varphi - \lambda _j (e_i \otimes \varphi)    ~\textup{for}~ 1\leq i,j \leq d\}$ are linearly independent as $\varphi$ has infinitely many  non-zero co-ordinates. As a result the third defect space $H_3$ has dimension $1+d+d^2.$ In the same fashion the $n$-th defect space $H_n$ has dimension $1+d+\cdots+d^{n-1}.$

\section{Commuting tuples}

\begin{lemma}

For a commuting operator tuple $T= (T_1,\cdots,T_d)$ and any positive integer $n$,
$$ \Delta_{T^n} \le \sum_{k=0}^{n-1} \left( \begin{array}{c} k+d-1 \\ d-1 \end{array} \right) \Delta_{T}.$$

\end{lemma}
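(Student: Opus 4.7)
The plan is to combine Corollary 2.3 with a simple combinatorial reduction that is available only in the commutative case. Corollary 2.3 gives, for any contractive tuple (commuting or not),
\begin{equation*}
 H_n \;=\; H_1 \,\vee\, \bigvee_{k=1}^{n-1}\;\bigvee_{1\le i_1,\dots,i_k\le d} T_{i_1}T_{i_2}\cdots T_{i_k}\,H_1,
\end{equation*}
so in the non-commuting setting one merely gets the crude estimate $\Delta_{T^n}\le(1+d+\cdots+d^{n-1})\Delta_T$ by counting $d^k$ words of length $k$. The point is that when the $T_i$ commute, the product $T_{i_1}\cdots T_{i_k}$ depends only on the multiset $\{i_1,\dots,i_k\}$, equivalently on the exponent vector $\alpha=(\alpha_1,\dots,\alpha_d)\in\mathbb Z_{\ge 0}^d$ with $|\alpha|=\alpha_1+\cdots+\alpha_d=k$. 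Hence the words of length $k$ collapse to monomials $T^{\alpha}=T_1^{\alpha_1}\cdots T_d^{\alpha_d}$.

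So the first step is to rewrite the formula of Corollary 2.3 as
\begin{equation*}
 H_n \;=\; \bigvee_{k=0}^{n-1}\;\bigvee_{|\alpha|=k} T^{\alpha} H_1,
\end{equation*}
where the $k=0$ term is interpreted as $H_1$ itself. The second step is to observe that for each fixed $k$ the number of multi-indices $\alpha\in\mathbb Z_{\ge 0}^d$ with $|\alpha|=k$ is the standard stars-and-bars count $\binom{k+d-1}{d-1}$, and each $T^{\alpha}H_1$ has dimension at most $\dim H_1=\Delta_T$ because $T^{\alpha}$ is a bounded linear map. Taking dimensions and using subadditivity under $\vee$, one gets
\begin{equation*}
 \Delta_{T^n} \;=\; \dim H_n \;\le\; \sum_{k=0}^{n-1} \binom{k+d-1}{d-1}\,\Delta_T,
\end{equation*}
which is the claimed bound.

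There is no real obstacle: the only thing to check carefully is that Corollary 2.3 is applied correctly and that the multi-index counting is right (including the $k=0$ term, for which $\binom{d-1}{d-1}=1$ contributes the single summand $H_1$). No commutativity is needed to show dimension shrinks under a bounded operator; commutativity is used precisely to identify words of length $k$ with monomials of degree $k$, cutting the count from $d^k$ to $\binom{k+d-1}{d-1}$.
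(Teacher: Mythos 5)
Your proof is correct, and it reaches the bound by a slightly different route than the paper. The paper argues by induction on $n$, using the operator identity $I-P_T^n(I)=I-P_T^{n-1}(I)+P_T^{n-1}\bigl(I-P_T(I)\bigr)$ and bounding only the increment at each step: the range of $P_T^{n-1}(I-P_T(I))$ lies in the span of the spaces $T_{i_1}\cdots T_{i_{n-1}}H_1$, and commutativity cuts the number of distinct such operators from $d^{n-1}$ to $\binom{n-1+d-1}{d-1}$. You instead invoke Corollary 2.3 to write $H_n$ all at once as the join of the spaces $T^{\alpha}H_1$ over multi-indices with $|\alpha|\le n-1$, and then count monomials by stars and bars. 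The combinatorial core --- words of length $k$ collapsing to the $\binom{k+d-1}{d-1}$ monomials of degree $k$ --- is identical, but your version makes the role of commutativity more transparent and reuses the structural description of the defect spaces already established in Section 2, whereas the paper's induction is self-contained and runs parallel to the proof of the non-commutative bound in Lemma 2.1. All the small points you flag (the $k=0$ term contributing $H_1$, the fact that $\dim T^{\alpha}H_1\le\dim H_1$ for any bounded operator, subadditivity of dimension under $\vee$) are handled correctly.
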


\begin{proof}

As before, for a contractive non-unital spanning set $T_1, T_2, \ldots ,T_d$, we have $\Delta_{T^2} \le (d+1) \Delta_T$. So the result holds for $n=2$. If it holds for $n-1$, then
$$ \Delta_{T^{n-1}} \le \sum_{k=0}^{n-2} \left( \begin{array}{c}
k+d-1 \\ d-1 \end{array} \right) \Delta_T. $$
Now $I - P_T^n(I) = I - P_T^{n-1} (I) + P_T^{n-1} ( I - P_T(I))$. So
\begin{eqnarray*} \Delta_{T^n} & \le & \sum_{k=0}^{n-2} \left( \begin{array}{c}
k+d-1 \\ d-1 \end{array} \right) \Delta_T + \mbox{ dim } \overline{\mbox{ Range }} P_T^{n-1} ( I - P_T(I)) \\
& \le & \sum_{k=0}^{n-2} \left( \begin{array}{c} k+d-1 \\ d-1 \end{array} \right) \Delta_T + \left( \begin{array}{c} n-1+d-1 \\ d-1 \end{array} \right) \Delta_T = \sum_{k=0}^{n-1} \left( \begin{array}{c}
k+d-1 \\ d-1 \end{array} \right) \Delta_T .\end{eqnarray*}
\end{proof}
\begin{definition}

Call a commuting operator tuple $T=(T_1,\cdots,T_d)$  \textbf{maximal} if for any positive integer $n$,
$$ \Delta_{T^n} =\sum_{k=0}^{n-1} \left( \begin{array}{c} k+d-1 \\ d-1 \end{array} \right) \Delta_{T}.$$

\end{definition}

The tuple $V = (V_1, V_2,
\ldots ,V_d)$ has a certain co-invariant subspace (i.e., the
subspace is invariant under $V_i^*$ for each $i$) that is of
special interest. To describe it, consider the permutation group $\sigma_k$ in $k$ symbols.
It has a unitary representation on the full tensor product space $\mathcal{L}^{\otimes k}$ for $k=1, 2,
\ldots$. The representation is defined on elementary tensors by
\begin{equation} \label{Upi} U_\pi(x_1\otimes x_2\otimes \dots\otimes x_k) =
x_{\pi^{-1}(1)}\otimes x_{\pi^{-1}(2)}\otimes\dots\otimes
x_{\pi^{-1}(k)}, \qquad x_i\in \mathcal{L}. \end{equation} The symmetric tensor product of $k$ copies of $\mathcal{L}$ is then the subspace of the full tensor product $\mathcal{L}^{\otimes
k}$ consisting of all vectors fixed under the representation of
the permutation group.
$$
\mathcal{L}^{\circleds \; k} = \{\xi\in \mathcal{L}^{\otimes k}:
U_{\pi}\xi = \xi, \quad\pi\in \sigma_k\}.
$$   Of course, $\mathcal{L}^{\circleds \;
1}=\mathcal{L}^{\otimes 1} = \mathcal L$. It is now natural to consider
the symmetric Fock space. It is a subspace of the full Fock space
and is defined by

$$ \Gamma_s(\mathcal{L}) = \mathbb{C} \oplus \mathcal{L} \oplus
\mathcal{L}^{\circleds \; 2}\oplus \cdots \oplus
\mathcal{L}^{\circleds \; k} \oplus \cdots .$$

Note that the vacuum vector of the full Fock space is in the
symmetric Fock space. We continue to denote by $E_0$ the
projection onto the one-dimensional space spanned by the vacuum
vector. Denote by $P_s$ the orthogonal projection onto the
subspace $ \Gamma_s(\mathcal{L})$ of $ \Gamma (\mathcal{L})$. Now
we specialize to $\mathcal L = {\mathbb C}^d$. As before, $\{ e_1,
e_2, \ldots ,e_d\}$ is an orthonormal basis. The projection $P_s$
acts on the full tensor product space $ \mathcal{L}^{\otimes k}$
by the following action on the orthonormal basis:
$$P_s (e_{i_1} \otimes e_{i_2} \otimes \cdots \otimes
e_{i_k})= \frac{1}{k!} \sum  e_{i_{\pi(1)}} \otimes e_{i_{\pi(2)}}
\otimes \cdots \otimes e_{i_{\pi(k)}}$$ where $\pi$ varies over the
permutation group $\sigma_k$. It is well known (see for example
\cite{Bhattacharyya-survey} that $\Gamma_s({\mathbb C}^d)$ is an invariant subspace
for $V_1^*, V_2^*, \ldots ,V_d^*$. Define the $d$-shift (see
\cite{sub3}) $S = (S_1, S_2, \ldots , S_d)$ on $
\Gamma_s(\mathbb{C}^d)$ by
$$S_i \xi = P_s (e_i \otimes \xi) = P_s V_i \xi \mbox{ for } i = 1,2,
\ldots ,d \mbox{ and } \xi \in \Gamma_s(\mathbb{C}^d).$$ Since $V_i$
are isometries, the $S_i$ are contractions. They are commuting operators, see \cite{Bhattacharyya-survey}.

\begin{lemma}

 The operator tuple $S= (S_1,\cdots,S_d)$  is a maximal commuting operator tuple.

\end{lemma}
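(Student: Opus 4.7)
The plan is to compute the defect spaces $H_n$ of the $d$-shift tuple $S$ explicitly and show that $H_n$ is precisely the truncation $\mathbb{C} \oplus \mathbb{C}^d \oplus (\mathbb{C}^d)^{\circleds 2} \oplus \cdots \oplus (\mathbb{C}^d)^{\circleds (n-1)}$ of the symmetric Fock space. Since $\dim (\mathbb{C}^d)^{\circleds k} = \binom{k+d-1}{d-1}$, this gives $\Delta_{S^n} = \sum_{k=0}^{n-1} \binom{k+d-1}{d-1}$, and since we shall show $\Delta_S = 1$, maximality follows.

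First I would handle $H_1$. Because $\Gamma_s(\mathbb{C}^d)$ is co-invariant for $V$, one has $S_i^* = V_i^*|_{\Gamma_s(\mathbb{C}^d)}$, and therefore $S_i S_i^* = P_s V_i V_i^* P_s$. Summing and using the computation from Lemma \ref{eqn V} that $I_{\Gamma(\mathbb{C}^d)} - \sum_i V_i V_i^* = E_0$, the vacuum projection, together with the fact that $\omega \in \Gamma_s(\mathbb{C}^d)$, gives $I_{\Gamma_s(\mathbb{C}^d)} - \sum_i S_i S_i^* = E_0$. Thus $H_1 = \Omega$ and $\Delta_S = 1$.

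Next, by Corollary 2.3 applied to $S$, we have
\[
H_n = H_1 \vee \bigvee_{k=1}^{n-1}\, \bigvee_{1 \le i_1,\ldots,i_k \le d} S_{i_1} S_{i_2}\cdots S_{i_k} H_1.
\]
The key computational step is the identity
\[
S_{i_1} S_{i_2}\cdots S_{i_k}\, \omega \;=\; P_s(e_{i_1} \otimes e_{i_2} \otimes \cdots \otimes e_{i_k}),
\]
which I would prove by induction on $k$. The base case $S_i \omega = P_s V_i \omega = e_i$ is immediate. For the inductive step one writes $S_{i_1}(S_{i_2}\cdots S_{i_k} \omega) = P_s\bigl( e_{i_1} \otimes P_s(e_{i_2}\otimes \cdots \otimes e_{i_k})\bigr)$, and then uses the elementary identity $P_s(a \otimes P_s b) = P_s(a \otimes b)$ on tensor products (which follows because averaging over $\sigma_{k-1}$ inside the argument and then over $\sigma_k$ outside produces the same symmetric tensor as averaging over $\sigma_k$ directly). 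As $(i_1, \ldots, i_k)$ ranges over $\{1, \ldots, d\}^k$, the vectors $P_s(e_{i_1} \otimes \cdots \otimes e_{i_k})$ span $(\mathbb{C}^d)^{\circleds k}$, since $P_s$ applied to the standard basis of $(\mathbb{C}^d)^{\otimes k}$ gives a spanning set for the symmetric subspace.

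Combining these observations,
\[
H_n \;=\; \bigoplus_{k=0}^{n-1} (\mathbb{C}^d)^{\circleds k},
\]
and therefore $\Delta_{S^n} = \sum_{k=0}^{n-1} \binom{k+d-1}{d-1} = \sum_{k=0}^{n-1} \binom{k+d-1}{d-1}\Delta_S$, proving that $S$ is a maximal commuting tuple. The only nontrivial piece of the argument is the identity for the symmetrisation of iterated compressions; everything else is bookkeeping on the corollary to Theorem 2.2 together with the standard fact that $\dim (\mathbb{C}^d)^{\circleds k} = \binom{k+d-1}{d-1}$.
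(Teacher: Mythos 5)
Your proof is correct, and it arrives at the same identification of the $n$-th defect space with the truncated symmetric Fock space $\mathbb{C}\oplus\mathbb{C}^d\oplus\cdots\oplus(\mathbb{C}^d)^{\circleds (n-1)}$ that the paper uses, but by a different route. The paper argues directly at the level of operators: it asserts that a computation analogous to the one for the creation tuple $V$ shows that $I-\sum S_{i_1}\cdots S_{i_n}S_{i_n}^*\cdots S_{i_1}^*$ is itself the orthogonal projection onto that truncated space, and then reads off the dimension. You instead compute only the first defect space ($H_1=\Omega$, $\Delta_S=1$, via $S_i^*=V_i^*|_{\Gamma_s(\mathbb{C}^d)}$ and the vacuum-projection identity for $V$) and then invoke Corollary 2.3 to generate $H_n$ as the closed span of the vectors $S_{i_1}\cdots S_{i_k}\omega$; the identity $S_{i_1}\cdots S_{i_k}\omega=P_s(e_{i_1}\otimes\cdots\otimes e_{i_k})$, justified by $P_s(a\otimes P_s b)=P_s(a\otimes b)$, then identifies each layer with $(\mathbb{C}^d)^{\circleds k}$. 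Your route has the advantage of needing only the range of the defect operator rather than the stronger claim that it is a projection, and it fills in the ``computation similar to Lemma 3.2'' that the paper leaves implicit; the paper's route is shorter on the page but leans on an unverified analogy. Both are sound, and the dimension count $\dim(\mathbb{C}^d)^{\circleds k}=\binom{k+d-1}{d-1}$ closes the argument in either case.
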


\begin{proof}
A computation similar to the proof of Lemma \ref{eqn V} shows that for any $n=1,2, \ldots$, the operator
$$ I - \sum_{1 \le i_1, i_2, \ldots ,i_n \le d} S_{i_1} S_{i_2}
\ldots S_{i_n} S_{i_n}^* \ldots S_{i_2}^* S_{i_1}^* $$ is the
projection onto $\mathbb{C} \oplus \mathbb{C}^d \oplus
({\mathbb{C}^d})^{\circleds 2}\oplus \cdots \oplus ({\mathbb{C}^d})^{\circleds
n-1}$. This is so because of commutativity. This space has dimension
$\sum_{k=0}^{n-1} \left( \begin{array}{c} k+d-1 \\ d-1 \end{array} \right).$ \end{proof}

Consider the Arveson space $H_{d} ^2$ on the unit ball $\mathbb B _d$  defined by the reproducing kernel $K_{\lambda} (z) = 1/(1-<z,\lambda>),$ where $<z,\lambda> = \sum _{j=1} ^d z_j \overline{\lambda _j}. $ From proposition 2.13 of \cite{sub3} we know that, the spaces  $H_{d} ^2$ and $\Gamma _s(\mathbb{C}^d)$ are unitarily equivalent and the $d$ tuple of operators $(S_1,\cdots,S_d)$ on $\Gamma _s(\mathbb{C}^d)$ is unitarily equivalent to the $d$- shift $(M_{z_1} ,\cdots,M_{z_d})$ on $H_{d} ^2.$  By a multiplier of $H_{d} ^2$ we mean a complex-valued function $f$ on $\mathbb B _d$ with the property $f H_{d} ^2 \subseteq H_{d} ^2. $The set of multipliers is a complex algebra of functions defined on the ball $\mathbb B _d$ which contains the constant functions,and since $H_{d} ^2$ itself contains the constant function $1,$ it follows that every multiplier must belong to $H_{d} ^2.$ In particular, multipliers are analytic functions on $\mathbb B _d.$ It is easy to see that a multiplication operator $M_f$ on $H_
{d} ^2$ defined by a multiplier $f$ is bounded.
 Let $M \subseteq H_{d} ^2$ be  closed and invariant under action of the $d$- shift and  $T_M = (M_{z_1} |_M,\cdots,M_{z_d} |_M).$ From proposition $6.3.1$ of \cite{Chen} (page no:$142$) we know that $f M \subseteq M$, if $f$ is a multiplier of $H_{d} ^2.$ It is obvious that the $d$ - tuple $T_M$ is a $d$-contraction and hence $$M_{z_1} P_M M_{z_1}^* + \cdots+M_{z_d} P_M M_{z_d}^* \leq P_M,$$ where $P_M$ is the orthogonal projection from $H_{d} ^2$ onto $M.$ The first defect dimension for the tuple $T_M$ is given by $$\Delta _{T_M} = \mbox{dim}\{ \overline{\mbox{Range}} (P_M - \sum _{i=1} ^d M_{z_i} P_M M_{z_i}^*) \}$$
The next theorem shows that $\Delta _{T_M} \geq 2,$ for any proper closed subspace $M$ of $H_{d} ^2,$ where $d \geq 2.$

\begin{theorem}
Let $M$ be a proper, closed subspace of $H_{d} ^2$,$d \geq 2,$ which is invariant under the action of the $d-$ shift. Then  $\Delta _{T_M} \geq 2.$
\end{theorem}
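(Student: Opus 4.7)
The plan is to argue by contradiction. Since the paper's standing hypothesis gives $\Delta_{T_M}\geq 1$, it suffices to exclude $\Delta_{T_M}=1$. Under that assumption the positive defect operator $\Delta := P_M - \sum_{i=1}^{d} M_{z_i} P_M M_{z_i}^{*}$ has rank one, so $\Delta f = \langle f,\xi\rangle\,\xi$ for some nonzero $\xi \in M$.

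The first key move is a reproducing-kernel computation. Let $K_\mu(z)=1/(1-\langle z,\mu\rangle)$ denote the Drury-Arveson kernel and $K_\mu^M := P_M K_\mu$ the kernel of $M$ at $\mu\in\mathbb{B}_d$. Invariance of $M$ combined with $M_{z_i}^{*}K_\mu = \bar\mu_i K_\mu$ yields $(M_{z_i}|_M)^{*}K_\mu^M = \bar\mu_i K_\mu^M$, whence, as a function of $z$,
\begin{equation*}
\Delta K_\mu^M \;=\; K_\mu^M - \sum_{i=1}^{d}\bar\mu_i\, M_{z_i} K_\mu^M \;=\; (1-\langle z,\mu\rangle)\,K_\mu^M(z).
\end{equation*}
On the other hand $\Delta K_\mu^M = \overline{\xi(\mu)}\,\xi$. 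Matching these expressions yields the kernel formula
\begin{equation*}
K_\mu^M(z) \;=\; \frac{\overline{\xi(\mu)}\,\xi(z)}{1-\langle z,\mu\rangle}, \qquad z,\mu\in\mathbb{B}_d.
\end{equation*}
Comparing the two available expressions for $\langle K_\mu^M, K_\nu^M\rangle$ gives $\langle \xi K_\mu,\xi K_\nu\rangle = \langle K_\mu,K_\nu\rangle$ on the dense span of those kernels with $\xi(\mu)\neq 0$, so $M_\xi$ extends to an isometry $H_d^2\to H_d^2$ with range $M$; equivalently, $\xi$ is an inner multiplier of $H_d^2$ and $M = \xi\,H_d^2$.

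The final step, where $d\geq 2$ enters decisively, shows that the only inner multipliers of $H_d^2$ are unimodular constants. Writing $\xi=\sum_\gamma c_\gamma z^\gamma$ in the orthogonal monomial basis with $\|z^\alpha\|^2 = \alpha!/|\alpha|!$, the isometric identity $\|\xi z^\alpha\|^2 = \|z^\alpha\|^2$ evaluated at $\alpha=0$ and then summed over $\alpha=e_1,\ldots,e_d$ combine (after using $(\gamma+e_i)! = (\gamma_i+1)\gamma!$ and $\sum_i(\gamma_i+1)=|\gamma|+d$) to give
\begin{equation*}
(1-d)\sum_\gamma |c_\gamma|^2\,\frac{|\gamma|}{|\gamma|+1}\,\frac{\gamma!}{|\gamma|!} \;=\; 0.
\end{equation*}
Since $d\geq 2$ the prefactor is nonzero and every summand is non-negative, so $c_\gamma=0$ whenever $|\gamma|\geq 1$. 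Hence $\xi$ is a constant of modulus one and $M = \xi\,H_d^2 = H_d^2$, contradicting the properness of $M$. The main technical obstacle I anticipate is the passage from the reproducing-kernel identity to a genuinely bounded, everywhere-defined isometric multiplier $M_\xi$ on $H_d^2$: one has to deal with the (possibly nonempty) zero set of $\xi$ and extend the Gram identity by density, after which the coefficient computation is elementary.
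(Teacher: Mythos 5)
Your argument is correct, and it follows the same contradiction skeleton as the paper ($\Delta_{T_M}=1$ forces $M=\xi H_d^2$ for an inner multiplier $\xi$, and for $d\ge 2$ the only inner multipliers of $H_d^2$ are unimodular constants), but the substance is genuinely different: the paper obtains both pivotal facts by citation to Chen and Guo (Proposition 6.3.7, which produces a multiplier $\varphi$ with $P_M=M_\varphi M_\varphi^*$, and Corollary 6.2.5, which says such a $\varphi$ must be constant), whereas you prove them from scratch. Your reproducing-kernel derivation of $K_\mu^M(z)=\overline{\xi(\mu)}\,\xi(z)/(1-\langle z,\mu\rangle)$ is a hands-on substitute for the Beurling-type proposition, and the technical obstacle you flag resolves in the standard way: since $\xi\neq 0$ is analytic, its zero set has empty interior, so the kernels $K_\mu$ with $\xi(\mu)\neq 0$ span a dense subspace of $H_d^2$ while the corresponding $K_\mu^M$ span a dense subspace of $M$ (any $f\in M$ orthogonal to all of them vanishes on a dense open set, hence identically); the resulting isometry $W$ satisfies $W^*K_\nu=\overline{\xi(\nu)}K_\nu$, which identifies it as $M_\xi$ with range $M$. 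Your coefficient computation in the last step is also correct (using $\|z^\alpha\|^2=\alpha!/|\alpha|!$, the identities at $\alpha=0$ and $\alpha=e_1,\dots,e_d$ combine to the displayed equation with the nonzero prefactor $1-d$, annihilating every coefficient of positive degree). The trade-off is clear: the paper's proof is two lines plus two external references, while yours is longer but self-contained and makes visible exactly where $d\ge 2$ enters.
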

\begin{proof}
Clearly $\Delta _{T_M}\geq 1.$ If $\Delta _{T_M} = 1,$ then from proposition $6.3.7$ of \cite{Chen} (page no:$148$) we have there exists a multiplier $\varphi$ of $H_{d} ^2$ such that $P_M = M_{\varphi} M_{\phi} ^* .$ Note that $\mbox {ker} (M_{\varphi}) = \{0\}.$ Indeed if $f \in  \mbox {ker} (M_{\varphi}),$ then   $$\varphi(z) f(z) = 0$$ for all $z \in \mathbb B_{d}.$ Since $\varphi$ is not identically equal to zero, there exists $w \in \mathbb B _d$ such that $\varphi (w) \neq 0.$ Which implies there exists $B (w , r)$ an open ball of radius $r$ on which $\varphi$ is not equal to zero. So $f (z) = 0$ for all $z \in B(w,r).$ As $f \in H_{d} ^2,$ so $f$ is identically equal to zero. Thus, $M = \varphi H_{d} ^2$ and $M _{\varphi}^* M _{\varphi}  = \mbox{ Id}. $ Now by Corollary $6.2.5,$ of \cite{Chen} (page no: 140), we see that $\varphi$ is constant, and hence $M = H_{d}^2.$ This contradiction shows that $\Delta _{T_M}\geq 2.$
\end{proof}

\section{Pure operator tuples}
The operator tuples $V=( V_1, V_2, \ldots ,V_d)$ on $\Gamma (C^d)$ and
 $S = (S_1, S_2, \ldots
,S_d)$ on $\Gamma _s (C^d)$ have the special
property that $ P_{V}^n(I) $ and $ P_{S}^n(I) $ converge
strongly to $0$ and hence they are \textbf{pure}.

\begin{lemma}
Let $T = (T_1,\cdots, T_d)$ be a pure operator tuple on $H$ and $M \subseteq H$ either an invariant or a co-invariant subspace for each $T _i$ and let $A_i = P_M T_i|_M$ for $i = 1,\cdots,d.$ Then the tuple $A = (A_1, \cdots,A_d)$ is pure.
\end{lemma}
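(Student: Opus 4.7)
The plan is to prove purity of $A$ by showing that $\langle P_A^n(I_M)x,x\rangle\to 0$ for every $x\in M$, via comparison with the corresponding quantity $\langle P_T^n(I)x,x\rangle$, which tends to zero by purity of $T$. Writing $A_\alpha=A_{\alpha_1}\cdots A_{\alpha_n}$ for a multi-index $\alpha=(\alpha_1,\dots,\alpha_n)\in\{1,\dots,d\}^n$, and similarly $T_\alpha$, one has
$$\langle P_A^n(I_M)x,x\rangle=\sum_{|\alpha|=n}\|A_\alpha^* x\|^2,\qquad \langle P_T^n(I)x,x\rangle=\sum_{|\alpha|=n}\|T_\alpha^* x\|^2,$$
so it suffices to establish the termwise inequality $\|A_\alpha^* x\|\le\|T_\alpha^* x\|$ for each $\alpha$ and each $x\in M$.

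In the invariant case, each $T_i$ preserves $M$, so $A_i=T_i|_M$ and $A_\alpha=T_\alpha|_M$. Taking adjoints within $M$ yields $A_\alpha^* x=P_M T_\alpha^* x$ for $x\in M$, and hence $\|A_\alpha^* x\|\le\|T_\alpha^* x\|$. In the co-invariant case, each $T_i^*$ preserves $M$, so for $x\in M$ one has $A_i^* x=P_M T_i^* x=T_i^* x$. Iterating this identity (which is legitimate because $T_{\alpha_1}^*x\in M$, then $T_{\alpha_2}^*T_{\alpha_1}^*x\in M$, and so on) gives
$$A_\alpha^* x=A_{\alpha_n}^*\cdots A_{\alpha_1}^* x=T_{\alpha_n}^*\cdots T_{\alpha_1}^* x=T_\alpha^* x,$$
so equality of norms holds. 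Summing over $|\alpha|=n$, both cases yield $\langle P_A^n(I_M)x,x\rangle\le\langle P_T^n(I)x,x\rangle$, whose right side tends to $0$ by purity of $T$; hence $P_A^n(I_M)\to 0$ strongly and $A$ is pure.

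The only delicate point, and the main (minor) obstacle, is bookkeeping the order of composition of adjoints and tracking when the projections $P_M$ can be dropped. In the co-invariant case this requires the observation that $T_i^*(M)\subseteq M$ allows the projections to disappear one at a time as the composition is unwound from the left; in the invariant case the projections remain but only shrink norms, which is all one needs for the inequality. No further hypothesis on $T$ beyond purity is used.
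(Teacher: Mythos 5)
Your proof is correct and follows essentially the same route as the paper's: in both cases one establishes $P_A^n(I_M)\le P_T^n(I)\vert_M$ (with equality in the co-invariant case) and then invokes purity of $T$. Your termwise multi-index computation of $\|A_\alpha^*x\|$ versus $\|T_\alpha^*x\|$ simply makes explicit the operator inequality that the paper asserts as ``easy to see'' in the invariant case and proves only for $n=1$ (followed by ``similarly'') in the co-invariant case.
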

\begin{proof}
First let $M \subseteq H$ is an invariant subspace for each $T_i$ and consider the tuple $A= (T_1|_M,\cdots, T_d|_M).$ It is easy to see that $P_A^n(I) \leq P_T^n (I).$ Since $P_T^n(I)$ goes strongly to zero so also $P_A^n(I).$

Now let $M \subseteq H$ is a co-invariant subspace for each $T_i$ and consider the tuple $A = (P_M T_1|_M, \cdots,P_M T_d|_M).$

  First observe that, \begin{eqnarray*} P_A(I) &=& \sum _{i=1}^d (P_M T_i|_M)
(P_M T_i|_{ M})^* \\ &=&  \sum _{i=1}^d P_{ M}T_i T_i^*|_{ M}
 \end{eqnarray*}

 Now, for $m \in { M}$ we have,
 \begin{eqnarray*}\langle \sum _{i=1}^d P_{ M}T_i T_i^*m ,m \rangle &= &
 \sum_{i=1}^d\langle P_{ M}T_i T_i^*m ,m \rangle\\&\leq& \langle \sum_{i=1}^d T_i T_i^*m ,m \rangle\\ &\leq& \langle m ,m \rangle\,.
  \end{eqnarray*}

  That implies $$P_A(I)\; \leq \;P_T(I)|_{ M} \;\leq \; I_{ M}$$

  Similarly we can show that,

  $$P_A^n (I)\; \leq \;P_T^n(I)|_{ M} $$

But the right hand side converges strongly to zero. So, $P_A^n
(I)$ converges strongly to zero.
\end{proof}

It is certainly not true that a maximal commuting operator tuple
is necessarily pure. For example, let us consider a spherical
isometry $Z= (Z_1, Z_2, \ldots ,Z_d)$ on a Hilbert space $\mathcal
N$, i.e., $Z_i$ are commuting and $\sum Z_i^* Z_i = I_{\mathcal
N}$. If for $i=1,2, \ldots ,d$, we define $A_i = S_i \oplus Z_i$
on $\Gamma_s({\mathbb C}^d) \oplus \mathcal N$, then $P_{A^n} (I)
= P_{S^n} (I) \oplus P_{Z^n} (I) = P_{S^n} (I) \oplus I$ and hence
\begin{enumerate}
\item $\Delta_{A^n} = \Delta_{S^n}$
    for every $n=1,2, \ldots$ and

\item as $n \rightarrow \infty$, the operator $P_{A^n}
        (I)$ converges strongly to a projection.

\end{enumerate}

The next lemma connects an irreducible tuple with pure tuple.
\begin{definition}
A tuple $T = (T_1,\cdots,T_d)$ on a common Hilbert space $H$ is said to be irreducible if there exists no proper closed subspace $M \subseteq H$ which is reducing under $T_i$ for $i= 1,\cdots,d.$
\end{definition}

\begin{lemma}
If $T = (T_1,\cdots,T_d)$ is an irreducible tuple such that $\Delta _T > 0,$ then $T$ is pure
 \end{lemma}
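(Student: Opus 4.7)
The plan is to argue by contradiction: suppose $T$ is irreducible and $\Delta_T>0$ but $T$ is not pure, and construct a nontrivial reducing projection for $T$.

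First I would set $Q := \mathrm{SOT}\text{-}\lim_{n\to\infty} P_T^n(I)$. The limit exists because $\{P_T^n(I)\}$ is decreasing and bounded below by $0$, so $0\le Q\le I$ and $Q\ne 0$ by the failure of purity. Continuity of the finite sum defining $P_T$ in the strong operator topology yields
\[
P_T(Q)=\mathrm{SOT}\text{-}\lim_n P_T^{n+1}(I)=Q.
\]
A direct positivity computation then shows that $\ker Q$ is $T_i^*$-invariant for each $i$: for $Qv=0$ we have $0=\langle Qv,v\rangle=\sum_i\langle QT_i^*v,T_i^*v\rangle$, and non-negativity of each summand forces $QT_i^*v=0$. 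This gives one side of reducing, but not both.

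The crux is to upgrade this to full commutativity $T_iQ=QT_i$, for which I would invoke a Kadison--Schwarz argument for the CP contraction $P_T$. Starting from
\[
\|Qv\|^2=\langle\textstyle\sum_i T_iQT_i^*v,\,Qv\rangle=\sum_i\langle QT_i^*v,\,T_i^*Qv\rangle,
\]
Cauchy--Schwarz together with $P_T(I)\le I$ yields $P_T(Q^2)\ge Q^2$. Iterating, the sequence $P_T^n(Q^2)$ is increasing and dominated by $P_T^n(Q)=Q$, so it converges in the SOT to a fixed point $R$ with $Q^2\le R\le Q$. The maximality of $Q$ among positive fixed points of $P_T$ dominated by $I$ (any such $X$ satisfies $X=P_T^n(X)\le P_T^n(I)\to Q$) should then be leveraged to force $R=Q^2$, i.e.\ $P_T(Q^2)=P_T(Q)^2$. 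Standard multiplicative-domain theory for Kadison--Schwarz maps (realizing $P_T(X)=W^*(X\otimes I)W$ via the column $W=(T_1^*,\dots,T_d^*)^{\mathsf T}$) then places $Q$ in the multiplicative domain of $P_T$, which unpacks to $T_iQ=QT_i$ for every $i$.

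With commutativity in hand, every spectral projection of the self-adjoint operator $Q$ commutes with all the $T_i$'s (hence with their adjoints) and therefore reduces $T$. Irreducibility forces $Q=c\,I$ for some scalar $c$, and $c>0$ because $Q\ne 0$. The fixed-point identity $cI=P_T(cI)=c\,P_T(I)$ then gives $P_T(I)=I$, i.e.\ $\Delta_T=0$, contradicting $\Delta_T>0$, so $T$ must have been pure. The hardest step is the equality $R=Q^2$ (equivalently, the multiplicative-domain conclusion): this is where the specific realization of $Q$ as the \emph{maximal} positive fixed point enters in an essential way, rather than merely its being some fixed point. An alternative route would be to invoke a Popescu-style canonical reducing decomposition of a row contraction into a pure summand and a ``row-coisometric'' summand ($\sum T_iT_i^*=I$ on the latter), after which irreducibility makes one summand trivial and the coisometric alternative is incompatible with $\Delta_T>0$.
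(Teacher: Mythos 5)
You are right to flag the step $R=Q^2$ as the crux, and it is exactly where the argument breaks. Maximality of $Q$ among positive fixed points of $P_T$ dominated by $I$ only re-derives $R\le Q$, which you already have from $P_T^n(Q^2)\le P_T^n(Q)=Q$; it provides no mechanism for pushing $R$ down to $Q^2$. Since $R=Q^2$ is equivalent to $P_T(Q^2)=Q^2$, i.e.\ to the multiplicative-domain membership you are trying to establish, the argument is circular at its key point. In fact the intermediate goal $T_iQ=QT_i$ is false in general, even for irreducible tuples. Take $d=1$ and let $T$ on $\ell^2$ be the adjoint of the weighted forward shift $T^*e_n=w_{n+1}e_{n+1}$ with $w_1=\tfrac12$ and $w_j=1$ for $j\ge2$. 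Then $P_T^k(I)=T^kT^{*k}=\mathrm{diag}(\tfrac14,1,1,\dots)$ for every $k\ge1$, so $Q=\mathrm{diag}(\tfrac14,1,1,\dots)$, while $TQe_1=\tfrac12e_0$ and $QTe_1=\tfrac18e_0$; moreover $P_T(Q^2)=TQ^2T^*=Q$, so here $R=Q\neq Q^2$. The Kadison--Schwarz route therefore cannot be completed as written.

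The more serious issue is that this same operator bears on the statement itself: $T$ is a contraction with $\Delta_T=\dim\overline{\mathrm{ran}}(I-TT^*)=1$, it is irreducible (a reducing projection commutes with $TT^*$ and hence with the rank-one spectral projection onto $\mathbb{C}e_0$, and $T^{*k}e_0=\tfrac12e_k$ then forces triviality), yet $P_T^k(I)$ does not tend to $0$. The paper's own proof goes through Popescu's dilation, placing $H$ co-invariantly inside $(\Gamma(\mathbb{C}^d)\otimes\mathcal{M})\oplus\mathcal{N}$ under $m\cdot V\oplus Z$ and asserting that irreducibility forces $H$ into one of the two summands; the example shows that a co-invariant subspace need not split along the Wold decomposition of its dilation, because the compressions of the two summands need not be reducing for $T$. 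Your proposed fallback, a ``canonical reducing decomposition of a row contraction into a pure summand and a row-coisometric summand,'' fails for the same reason: the example is neither pure, nor a coisometry, nor a nontrivial direct sum, so no such reducing decomposition exists. Any correct version of the lemma needs an additional hypothesis (for instance, that $T$ is already realized as the compression of $V\otimes I_{\mathcal{M}}$ to a co-invariant subspace), and neither your argument nor the one in the paper supplies it.
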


\begin{proof}

Theorem 4.5 of \cite {Bhattacharyya-survey} states that if $T =
(T_1,\cdots,T_d)$ is any contractive tuple acting on a separable
Hilbert space $H$ and $\Delta _T = m$ (which is a non-negative
integer or $\infty$), then there is a separable Hilbert space
$\mathcal{M}$ of dimension $m,$ another separable Hilbert space
$\mathcal{N}$
with a tuple of operators $Z=(Z_1,\cdots,Z_d)$ acting on it, satisfying
 $Z_i ^* Z_j = \delta _{ij}$ for $1 \leq i ,j \leq d$ and $Z_1 Z_1 ^* + \cdots + Z_n Z_n ^* = I_\mathcal{N}$ such that :\\
$(a)$ $H$ is contained in $\hat{ H} = (\Gamma (C ^d) \otimes
\mathcal{M}) \oplus \mathcal {N}$ as a subspace and it is
co-invariant under  $A = m.V \oplus Z.$\\ $(b)$ $T$ is the
compression of $A$ to $H$ i.e,.$$T_{i_1} T_{i_2} \cdots T_{i_k} h
= P_{H} A_{i_1} A_{i_2} \cdots A_{i_k} h ~  \mbox {for every} ~
h\in H, k\geq 1 \mbox{and} 1 \leq i_1 , i_2 , \cdots, i_k \leq
n.$$ \\ $(c)$  $\hat{ H} =  \overline{span} \{ A_{i_1} A_{i_2}
\cdots A_{i_k} h    ,  \mbox{where} h\in H , k\geq 1  \mbox{and} 1
\leq i_1 , i_2 , \cdots, i_k \leq n \}$. \\ Since $(T_1 , \cdots ,
T_d)$ is an irreducible tuple on $H$, so either $H \subseteq
\Gamma (C ^d) \otimes \mathcal{M}$ or $H \subseteq \mathcal{N}.$
Now if $H \subseteq \mathcal{N}$ then $\Delta _{T} = 0. $ So $H
\subseteq  \Gamma (C ^d) \otimes \mathcal{M}$ and $T_{i} = P_{H}
(V_i \otimes I_{\mathcal{M}})|_{H}.$ As a result the tuple $T= (
T_1 , \cdots , T_d)$ is pure.
 \end{proof}

 The converse of this lemma is not true. Let us consider the operator tuple $T = (V_1 \oplus S_1, \cdots, V_d \oplus S_d)$ defined on the Hilbert space $\Gamma (C^d) \oplus \Gamma _{s} (C^d).$ Here $T$ is pure, $\Delta _{T} = 2$, but $T$ is not irreducible. \\

We end with an example of a pure operator tuple, which is not maximal.Consider the Hilbert space $H = \Gamma (\mathbb C^{d-1}) \bigoplus \mathbb C$ and let $(T_1,\cdots , T_d)$ be the operator tuple on $H$ given by $$ T_1 = \begin{pmatrix} V_1 & 0 \\ 0 & 0 \end {pmatrix}  , \cdots ,T_{d-1} = \begin{pmatrix} V_{d-1} & 0 \\ 0 & 0 \end {pmatrix} , T_d = \begin{pmatrix}0 & 0 \\ 0 & rI \end{pmatrix}.$$ where $0<r<1.$

\end{document}